\ProcessOptions \RequirePackage{amsmath}
                \def\8{\theta}
\newcommand{\hol}{{\mathcal Hol}}
\newcommand{\h}{\mathcal{H}}
\def\D{{\mathbb D}}
\def\T{{\mathbb T}}
\def\C{{\mathbb C}}
\def\({\left(}       \def\){\right)}
\def\ol{\overline}
\newcommand{\ig}{\stackrel{\text{def}}{=}}
\newcommand{\equstart}{\begin{equation}\begin{aligned}}
\newcommand{\equend}{\end{aligned}\end{equation}}
\newcommand{\equstartu}{\begin{equation*}\begin{aligned}}
\newcommand{\equendu}{\end{aligned}\end{equation*}}
\newtheorem{theorem}{Theorem}
\newtheorem{lemma}{Lemma}
\newtheorem{proposition}{Proposition}
\theoremstyle{definition}
\theoremstyle{remark}
\numberwithin{equation}{section}
\theoremstyle{theorem}
\newtheorem{other}{\bf Theorem}              
\newtheorem{otherl}{\bf Lemma}        
\newenvironment{pf}{\noindent{\emph{Proof.}}}{$\Box$ }
\newenvironment{Pf}{\noindent{\emph{Proof of}}}{$\Box$ }
\DeclareMathOperator{\Imag}{Im}
\begin{document}
\title[A generalized Hilbert matrix acting on Hardy spaces]
{A generalized Hilbert matrix acting on Hardy spaces}

\author[Ch.~Chatzifountas]{Christos Chatzifountas}
\address{Departamento de An\'alisis Matem\'atico,
Universidad de M\'alaga, Campus de Teatinos, 29071 M\'alaga, Spain}
 \email{christos.ch@uma.es}
\author[D.~Girela]{Daniel Girela}
 \address{Departamento de An\'alisis Matem\'atico,
Universidad de M\'alaga, Campus de Teatinos, 29071 M\'alaga, Spain}
 \email{girela@uma.es}
\author[J.~A.~Pel\'aez]{Jos\'e \'Angel Pel\'aez}
 \address{Departamento de An\'alisis Matem\'atico,
Universidad de M\'alaga, Campus de Teatinos, 29071 M\'alaga, Spain}
 \email{japelaez@uma.es}

\date{July 4, 2013} \keywords{Hilbert matrices, Hardy spaces, BMOA,
Carleson measures, Integration operators, Hankel operators, Besov
spaces, Schatten classes}

\begin{abstract}\par If $\mu $ is a positive Borel measure on the interval $[0, 1)$,
 the Hankel matrix $\mathcal H_\mu =(\mu _{n,k})_{n,k\ge
0}$  with entries $\mu _{n,k}=\int_{[0,1)}t^{n+k}\,d\mu(t)$  induces
formally the operator
$$\mathcal{H}_\mu (f)(z)=
\sum_{n=0}^{\infty}\left(\sum_{k=0}^{\infty}
\mu_{n,k}{a_k}\right)z^n$$ on the space of all analytic functions
$f(z)=\sum_{k=0}^\infty a_kz^k$, in the unit disc $\D $. In this
paper we describe those measures $\mu$ for which $\h_\mu $ is a
bounded (compact) operator from $H^p$ into $H^q$,  $0<p,q<\infty $.
We  also characterize the measures $\mu $ for which $\mathcal H_\mu
$ lies in the Schatten class $S_p(H^2)$, $1<p<\infty$.
\end{abstract}

\thanks{This
research is supported by a grant from la Direcci\'{o}n General de
Investigaci\'{o}n, Spain (MTM2011-25502) and by a grant from la
Junta de Andaluc\'{\i}a (P09-FQM-4468 and FQM-210). The third author
is supported also by the \lq\lq Ram\'on y Cajal program\rq\rq ,
Spain.}

\maketitle

\bigskip
\section{Introduction and main results}\label{intro}
Let $\D=\{z\in\C: |z|<1\}$ denote the open unit disc in the complex
plane $\C$ and let $\hol (\D)$ be the space of all analytic
functions in $\D$. We also let  $H^p$ ($0<p\le \infty $) be the
classical Hardy spaces (see \cite{D}).
\par
If\, $\mu $ is a finite positive Borel measure on $[0, 1)$ and $n\,
= 0, 1, 2, \dots $, we let $\mu_n$ denote the moment of order $n$ of
$\mu $, that is,
$$\mu _n=\int _{[0,1)}t^n\,d\mu (t),$$
and we define $\mathcal H_\mu $ to be the Hankel matrix $(\mu
_{n,k})_{n,k\ge 0}$ with entries $\mu _{n,k}=\mu_{n+k}$. The matrix
$\mathcal H_\mu $ can be viewed as an operator on spaces of analytic
functions by its action on the Taylor coefficients: \,$ a_n\mapsto
\sum_{k=0}^{\infty} \mu_{n,k}{a_k}, \quad n=0,1,2, \cdots . $\, To
be precise, if\,
 $f(z)=\sum_{k=0}^\infty a_kz^k\in \hol (\D )$
we define
\begin{equation}\label{H}
\mathcal{H}_\mu (f)(z)= \sum_{n=0}^{\infty}\left(\sum_{k=0}^{\infty}
\mu_{n,k}{a_k}\right)z^n,
\end{equation}
whenever the right hand side makes sense and defines an analytic
function in $\D $.
\par\medskip If $\mu $ is the Lebesgue measure on $[0,1)$ the matrix
$\mathcal H_\mu $ reduces to the classical Hilbert matrix \,
$\mathcal H= \left ({(n+k+1)^{-1}}\right )_{n,k\ge 0}$, which
induces the classical Hilbert operator $\h$, a prototype of a Hankel
operator which has attracted a considerable amount of attention
during the last years. Indeed, the study of  the boundedness, the
operator norm and the spectrum of $\h$  on Hardy and weighted
Bergman spaces \cite{AlMonSa,DiS,DJV,GaGiPeSis,PelRathg} links $\h$
up to weighted composition operators, the Szeg\"{o} projection,
Legendre functions and the theory of Muckenhoupt weights.
\par Hardy's
inequality \cite[page~48]{D} guarantees that  $\mathcal{H}(f)$ is a
well defined analytic function in $\D$ for every $f\in H^1$.
However, the resulting Hilbert operator $\mathcal{H}$ is bounded
from $H^p$ to $H^p$ if and only if $1<p<\infty$ \cite{DiS}.
In a recent paper \cite{LNP} Lanucha, Nowak, and Pavlovic have
considered the question of finding subspaces of $H^1$ which are
mapped by $\mathcal{H} $ into $H^1$.
\par Galanopoulos and Pel\'{a}ez \cite{Ga-Pe2010} have described
the measures $\mu $ so that the generalized Hilbert operator $\mathcal H_\mu
$ becomes well defined and bounded on $H^1$. Carleson measures
 play
a basic role in the work.
\par If $I\subset \partial\D$ is an
interval, $\vert I\vert $ will denote the length of $I$. The
\emph{Carleson square} $S(I)$ is defined as
$S(I)=\{re^{it}:\,e^{it}\in I,\quad 1-\frac{|I|}{2\pi }\le r <1\}$.
Also, for $a\in \D$, the Carleson box $S(a)$ is defined by
\begin{displaymath}
S(a)=\Big \{ z\in \D : 1-|z|\leq 1-|a|,\, \Big |\frac{\arg
(a\bar{z})}{2\pi}\Big |\leq \frac{1-|a|}{2} \Big \}.
\end{displaymath}
\par If $\, s>0$ and $\mu$ is a positive Borel  measure on  $\D$,
we shall say that $\mu $
 is an $s$-Carleson measure
  if there exists a positive constant $C$ such that
\[
\mu\left(S(I)\right )\le C{|I|^s}, \quad\hbox{for any interval
$I\subset\partial\D $},
\]
or, equivalently,
 if there exists $C>0$ such that
\[
\mu\left(S(a)\right )\le C{(1-\vert a\vert )^s}, \quad\hbox{for all
$a\in\D$}.
\]
If $\mu $ satisfies $\displaystyle{\lim_{\vert I\vert\to 0}\frac{\mu
\left (S(I)\right )}{\vert I\vert ^s}=0}$ or, equivalently,
$\displaystyle{\lim_{\vert a\vert \to 1}\frac{\mu \left (S(a)\right
)}{(1-\vert a\vert^2)^s}=0}$, then we say that $\mu $ is a\, {\it
vanishing $s$-Carleson measure}. \par  An $1$-Carleson measure,
respectively, a vanishing $1$-Carleson measure, will be simply
called a Carleson measure, respectively, a vanishing Carleson
measure.
\par As an important ingredient in his work on interpolation by
bounded analytic functions, Carleson \cite{Ca2} (see also Theorem
9.3 of \cite{D}) proved that if $0<p<\infty $ and $\mu$ is a
positive Borel measure in $\D $ then $H\sp p\subset L\sp p(d\mu )$
if and only if \,$\mu $\, is a Carleson measure. This result was
extended by Duren \cite{Du:Ca} (see also \cite[Theorem~9.\@4]{D})
who proved that for $\,0<p\le q<\infty $,
 $H^p\subset L^q(d\mu)$
if and only if $\mu$ is a $q/p$-Carleson measure. \par If $X$ is a
subspace of $\hol (\D )$, $0<q<\infty $, and $\mu $ is a positive
Borel measure in $\D $, $\mu $ is said to be a \lq\lq ${q}${\it
{-Carleson measure for the space}} ${X}$\rq\rq \, or an \lq\lq
${{(X, q)}}${\it-Carleson measure}\rq\rq \, if $X\subset L^q(d\mu
)$. The $q$-Carleson measures for the spaces $H^p$, $0<p,q<\infty $
are completely characterized. The mentioned results of Carleson and
Duren can be stated saying that if $\,0<p\le q<\infty $\, then a
positive Borel measure $\mu $ in $\D$ is a $q$-Carleson measure for
$H^p$ if and only if $\mu$ is a $q/p$-Carleson measure. Luecking
\cite{Lu90} and Videnskii \cite{Vid} solved the remaining case
$0<q<p$. We mention \cite{Bl-Ja} for a complete information on
Carleson measures for Hardy spaces.

\par\medskip Galanopoulos and Pel\'{a}ez proved in \cite{Ga-Pe2010}
that if\, $\mu $\, is a Carleson measure then the operator $\mathcal
H_\mu $\,  is well defined in $H^1$, obtaining en route the
following integral representation
\begin{equation}\label{HmuImuH1}\mathcal H_\mu
(f)(z)\,=\,\int_{[0,1)}\frac{f(t)}{1-tz}\,d\mu (t),\quad z\in
\mathbb D,\quad\text{for all $f\in H^1$}.\end{equation} For
simplicity, we shall write throughout the paper
\begin{equation}\label{Imu}I_\mu
(f)(z)=\int_{[0,1)}\frac{f(t)}{1-tz}\,d\mu (t),\end{equation}
whenever the right hand side makes sense and defines an analytic
function in $\D $. It was also proved in \cite{Ga-Pe2010} that if
$I_\mu (f)$ defines an analytic function in $\D $ for all $f\in
H^1$, then $\mu $ has to be a Carleson measure. This condition  does
not ensures the boundedness of $\h_\mu$ on $H^1$, as the classical
Hilbert operator $\mathcal H$ shows.

\par\medskip
Let $\mu$ be a positive Borel measure in $\D$, $0\le \alpha <\infty
$, and $0<s<\infty $. Following \cite{Zhao}, we say that $\mu$ is an
 $\alpha$-logarithmic $s$-Carleson measure, respectively, a vanishing $\alpha$-logarithmic $s$-Carleson measure, if
\begin{displaymath}
\sup_{a\in\D}\frac{\mu\left(S(a)\right)\left (\log
\frac{2}{1-|a|^2}\right )^{\alpha}}{(1-|a|^2)^s}<\infty,
\quad\text{respectively,}\,\,\,\lim_{|a|\to
1^{-}}\frac{\mu\left(S(a)\right)\left (\log \frac{2}{1-|a|^2}\right
)^{\alpha}}{(1-|a|^2)^s}=0. \end{displaymath} \par
Theorem\,\@1.\,\@2 of \cite{Ga-Pe2010} asserts that if $\mu $ is a
Carleson measure on $[0,1)$, then $\mathcal H_\mu$ is a bounded
(respectively, compact) operator from $H^1$ into $H^1$ if and only
if $\mu $ is a $1$-logarithmic $1$-Carleson measure (respectively, a
vanishing $1$-logarithmic $1$-Carleson measure). \par It is also
known that $\mathcal H_\mu $ is bounded from $H^2$ into itself if
and only if $\mu $ is a Carleson measure (see \cite[p. 42, Theorem
$7.2$]{Pell}).

\par\medskip Our
main aim in this paper is to study the generalized Hilbert matrix
$\mathcal H_\mu $ acting on $H^p$ spaces $(0<p<\infty $). Namely,
for any given $p, q$ with $0<p, q<\infty $, we wish to characterize
those for which $\h_\mu $ is a bounded (compact) operator from $H^p$
into $H^q$ and to describe those measures $\mu$ such that $\h_\mu$
belongs to the Schatten class $\mathcal S_p(H^2)$. A key tool will
be a description of those positive Borel measures $\mu $ on $[0,1)$
for which $\h_\mu $ is well defined in $H^p$ and
$\h_\mu(f)=I_\mu(f)$.
 Let us
start with the case $p\le 1$.

\begin{theorem}\label{Def:p<1} Suppose that $0<p\le 1$ and let $\mu
$ be a positive Borel measure on $[0,1)$. Then the following two
conditions are equivalent:
\begin{itemize}
\item[(i)] $\mu $ is an
$\frac{1}{p}$-Carleson measure.
\item[(ii)] $I_\mu (f)$ is a well
defined analytic function in $\D $ for any $f\in H^p$.
\end{itemize}

\par Furthermore, if (i) and (ii) hold and $f\in H^p$, then $\mathcal H_\mu (f)$  is
also a well defined analytic function in $\D $, and $\mathcal H_\mu
(f)=I_\mu (f)$, for all $f\in H^p.$
\end{theorem}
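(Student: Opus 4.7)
The plan is to prove the equivalence by independent arguments for each implication and then verify the identification $\mathcal H_\mu(f)=I_\mu(f)$ by a dilation approximation; the key input throughout is the Carleson--Duren theorem (for $0<p\le q<\infty$, $H^p\subset L^q(d\mu)$ whenever $\mu$ is a $q/p$-Carleson measure). For (i)$\Rightarrow$(ii), I would take $q=1$: if $\mu$ is $\frac{1}{p}$-Carleson, then $\int_{[0,1)}|f|\,d\mu\le C\|f\|_{H^p}$ for all $f\in H^p$. Combined with $|1-tz|\ge 1-|z|$ for $t\in[0,1)$ and $z\in\D$, this makes the defining integral for $I_\mu(f)(z)$ absolutely convergent and bounded by $C\|f\|_{H^p}/(1-|z|)$; a geometric-series expansion together with Fubini--Tonelli yields $I_\mu(f)(z)=\sum_{n\ge 0} c_n z^n$ with $c_n=\int_{[0,1)}f(t)t^n\,d\mu(t)$, so $I_\mu(f)$ is analytic in $\D$.

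For the converse (ii)$\Rightarrow$(i), absolute convergence of $I_\mu(f)(0)=\int f\,d\mu$ for every $f\in H^p$ says the restriction map $J:H^p\to L^1(d\mu)$, $Jf=f|_{[0,1)}$, is well-defined; the closed graph theorem (applicable since $H^p$ is an F-space and $L^1(d\mu)$ is Banach) makes $J$ continuous, so $\Lambda(f):=\int f\,d\mu$ is a bounded functional on $H^p$. I would then test $\Lambda$ against the normalized family
\begin{equation*}
f_r(z)=\frac{(1-r^2)^{1/p}}{(1-rz)^{2/p}},\qquad r\in[0,1),
\end{equation*}
for which $\|f_r\|_{H^p}=1$ (its $p$-th power on $\T$ coincides with the Poisson kernel at $r$) and, using $1-rt\le 1-r^2$ for $t\in[r,1)$, $\Lambda(f_r)\ge (1-r^2)^{-1/p}\mu([r,1))$. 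Bounding the left-hand side by $\|\Lambda\|$ gives $\mu([r,1))\lesssim (1-r)^{1/p}$, which is (i).

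The identification $\mathcal H_\mu(f)=I_\mu(f)$ is obtained by a dilation approximation. For $\rho<1$, the dilate $f_\rho(z)=f(\rho z)\in H^\infty$ has $\sum_k|a_k|\rho^k<\infty$ (from the classical bound $|a_k|\lesssim k^{1/p-1}\|f\|_{H^p}$), so a direct Fubini interchange legitimately gives
\begin{equation*}
I_\mu(f_\rho)(z)=\sum_{n\ge 0} z^n\sum_{k\ge 0} a_k\rho^k\mu_{n+k}=\mathcal H_\mu(f_\rho)(z)
\end{equation*}
with absolutely convergent inner sums. Since $f_\rho\to f$ in $H^p$ as $\rho\to 1^-$ and Carleson--Duren gives $f_\rho\to f$ in $L^1(d\mu)$, $I_\mu(f_\rho)\to I_\mu(f)$ uniformly on compact subsets of $\D$, so the Taylor coefficients of $I_\mu(f)$ are the limits (as $\rho\to 1^-$) of those of $\mathcal H_\mu(f_\rho)$, which identifies the two analytic functions.

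The chief technical subtlety sits in this last step when $p\le 1$: the formal inner sum $\sum_k a_k\mu_{n+k}$ need not converge absolutely in the worst case (the sharp bounds $|a_k|\lesssim k^{1/p-1}$ and $\mu_n\lesssim n^{-1/p}$ independently yield a borderline-harmonic product), and the Taylor partial sums of $f$ do not converge to $f$ in $H^p$ for $p\le 1$. The dilation $f_\rho$ provides a clean regularization for which every manipulation is legal, and the passage to the limit identifies $\mathcal H_\mu(f)$, understood as the analytic function with coefficients the natural (Abel) limits of the formal series, with $I_\mu(f)$.
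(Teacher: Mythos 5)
Your arguments for (i)$\Rightarrow$(ii) and (ii)$\Rightarrow$(i) are sound. The first coincides with the paper's. For the converse the paper takes a different functional-analytic route: it first majorizes $|f|$ on $[0,1)$ by a zero-free $F\in H^p$ with $F>0$ on $(0,1)$ and $\|F\|_{H^p}=\|f\|_{H^p}$ (its Proposition~\ref{may-Hp}) to extract $\int_{[0,1)}|f|\,d\mu<\infty$ from the mere well-definedness of $I_\mu(F)(0)$, then applies the uniform boundedness principle to the truncations $T_\beta f=f\chi_{\{|z|<\beta\}}$ and concludes via Duren's theorem; you instead read well-definedness at $z=0$ as Lebesgue integrability, use the closed graph theorem, and test against the normalized family $f_r$. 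Both are legitimate, though you should at least note why the graph is closed ($H^p$-convergence implies pointwise convergence on $[0,1)$) and be aware that the paper's majorant device is there precisely to justify the step you take for granted, namely passing from ``the integral makes sense'' to ``$f\in L^1(d\mu)$''.

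The genuine gap is in the final part. The theorem asserts that $\mathcal H_\mu(f)$ as defined in \eqref{H} is a well-defined analytic function, which requires the series $\sum_k\mu_{n+k}a_k$ to \emph{converge} for each $n$. Your dilation argument only shows that $\sum_k\mu_{n+k}a_k\rho^k\to\int_{[0,1)}t^nf(t)\,d\mu(t)$ as $\rho\to1^-$, i.e.\ Abel summability, and you then explicitly redefine $\mathcal H_\mu(f)$ through Abel limits --- that proves a weaker statement than the one claimed. Moreover your pessimism about absolute convergence is unfounded: the paper obtains it by not multiplying the two worst-case bounds against each other. Writing $|a_k|=|a_k|^p\,|a_k|^{1-p}$ and using the pointwise estimate $|a_k|^{1-p}\lesssim(k+1)^{(1-p)^2/p}$ only on the second factor gives
\[
\sum_{k}|\mu_{n+k}|\,|a_k|\;\lesssim\;\sum_k (k+1)^{-1/p}\,|a_k|^p\,(k+1)^{(1-p)^2/p}\;=\;\sum_k(k+1)^{p-2}|a_k|^p\;\lesssim\;\|f\|_{H^p}^p,
\]
uniformly in $n$, by the Hardy--Littlewood coefficient inequality for $H^p$, $0<p\le1$. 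With absolute convergence in hand, the identity $\sum_k\mu_{n+k}a_k=\int_{[0,1)}t^nf(t)\,d\mu(t)$ and hence $\mathcal H_\mu(f)=I_\mu(f)$ follow at once. (Alternatively, since your bounds show $a_k\mu_{n+k}=O(1/k)$, Abel summability plus the Hardy--Littlewood Tauberian theorem would also give convergence of the series, but some such input must be supplied; as written your argument does not establish the well-definedness of $\mathcal H_\mu(f)$.)
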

\par\medskip
We remark that for $p=1$, this reduces to
\cite[Proposition\,\@1.\,\@1]{Ga-Pe2010}. \par\medskip For $0<q<1$,
we let $B_q$ denote the space consisting of those $f\in \hol (\D )$
for which
$$\int_0^1\,(1-r)^{\frac{1}{q}-2}M_1(r,f)\,dr<\infty .$$ The Banach space
$B_q$ is the \lq\lq containing Banach space\rq\rq \, of $H^q$, that
is, $H^q$ is a dense subspace of $B_q$, and the two spaces have the
same continuous linear functionals \cite{DRS}. Next we shall show
that if $\mu $ is an $1/p$-Carleson measure then $\mathcal H_\mu $
actually applies $H^p$ into $B_q$ for all $q<1$. We shall also give
a characterization of those $\mu $ for which $\mathcal H_\mu $ map
$H^p$ into $H^q$ ($q\ge 1$). Before stating these results precisely,
let us mention that all over the paper we shall use the notation
that for any given $\alpha
>1$, $\alpha^\prime $ will denote the conjugate exponent of $\alpha
$, that is, $\frac{1}{\alpha }+\frac{1}{\alpha ^\prime }=1$, or
$\alpha^\prime =\frac{\alpha }{\alpha -1}$.

\begin{theorem}\label{bound:p<1}
 Suppose that $0<p\le 1$ and let $\mu $
be a positive Borel measure on $[0,1)$ which is an
$\frac{1}{p}$-Carleson measure.
\begin{itemize}\item[(i)] If\, $0<q<1$, then $\h_\mu $ is a bounded
operator from $H^p$ into $B_q$, the containing Banach space of
$H^q$.
\item[(ii)] $\h_\mu $ is a bounded
operator from $H^p$ into $H^1$ if and only if $\mu $ is an
$1$-logarithmic $\frac{1}{p}$-Carleson measure.
\item[(iii)] If\, $q>1$ then
$\h_\mu $ is a bounded operator from $H^p$ into $H^q$ if and only if
$\mu $ is an $\frac{1}{p}+\frac{1}{q^\prime}$-Carleson measure.
\end{itemize}
\end{theorem}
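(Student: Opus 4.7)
The plan is to exploit Theorem \ref{Def:p<1} throughout, together with one master inequality. Since $\mu$ is $\frac{1}{p}$-Carleson, Theorem \ref{Def:p<1} identifies $\mathcal{H}_\mu f(z)$ with $I_\mu f(z)=\int_{[0,1)}\frac{f(t)}{1-tz}\,d\mu(t)$ for every $f\in H^p$. The triangle inequality for integrals combined with the standard estimate $M_1\bigl(r,(1-t\,\cdot)^{-1}\bigr)\le C\log\frac{2}{1-tr}$ then yields
\[
M_1(r,\mathcal{H}_\mu f)\le C\int_{[0,1)} |f(t)|\log\frac{2}{1-tr}\,d\mu(t),
\]
which will drive the sufficiency halves of (i) and (ii); part (iii) will require a separate duality-and-H\"older argument.

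For \emph{part (i)}, use $\log\frac{2}{1-tr}\le\log\frac{2}{1-r}$; since $q<1$ forces $1/q-2>-1$, the constant $\int_0^1(1-r)^{1/q-2}\log\frac{2}{1-r}\,dr$ is finite. Multiplying the master inequality by $(1-r)^{1/q-2}$, integrating in $r$, and using Fubini produces $\|\mathcal{H}_\mu f\|_{B_q}\le C\int_{[0,1)}|f|\,d\mu$, and Duren's extension of the Carleson embedding theorem (recalled in the introduction) gives $\int|f|\,d\mu\le C\|f\|_{H^p}$ because $\mu$ is $\frac{1}{p}$-Carleson. For \emph{sufficiency in (ii)}, let $r\to 1^-$ in the master inequality: monotone convergence replaces $\log\frac{2}{1-tr}$ by $\log\frac{2}{1-t}$, whence $\|\mathcal{H}_\mu f\|_{H^1}\le C\int|f(t)|\log\frac{2}{1-t}\,d\mu(t)$. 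Setting $d\nu(t)=\log\frac{2}{1-t}\,d\mu(t)$, the $1$-logarithmic $\frac{1}{p}$-Carleson condition on $\mu$ is exactly $\nu(S(a))\le C(1-|a|)^{1/p}$, so $\nu$ is $\frac{1}{p}$-Carleson and Duren once more yields $\int|f|\,d\nu\le C\|f\|_{H^p}$.

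For \emph{sufficiency in (iii)} with $q>1$, exploit the duality $(H^q)^*=H^{q'}$ together with the pairing identity
\[
\langle I_\mu f,\,g\rangle_{\T}=\int_{[0,1)} f(t)\,\overline{g(t)}\,d\mu(t),
\]
which follows from a direct Cauchy-kernel expansion (applied to the dilates $(I_\mu f)_r$ to handle boundary values). Choose H\"older exponents $\alpha=1+p/q'$ and $\beta=1+q'/p$, so that $\frac1\alpha+\frac1\beta=1$ and $\alpha/p=\beta/q'=\frac1p+\frac1{q'}$; a direct check shows $\alpha\ge p$ (because $q\ge p$) and $\beta\ge q'$ (because $p\le 1$). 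The hypothesis that $\mu$ is $\bigl(\frac1p+\frac1{q'}\bigr)$-Carleson therefore provides, via Duren, both embeddings $H^p\hookrightarrow L^\alpha(d\mu)$ and $H^{q'}\hookrightarrow L^\beta(d\mu)$; H\"older gives $|\langle I_\mu f,g\rangle|\le C\|f\|_{H^p}\|g\|_{H^{q'}}$, and the supremum over $\|g\|_{H^{q'}}\le 1$ closes the argument.

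For \emph{necessity} in (ii) and (iii), assume the operator bounded and test on the classical family $f_a(z)=(1-a)^b(1-az)^{-b-1/p}$, $a\in[0,1)$, $b>0$ fixed, for which $\|f_a\|_{H^p}\sim 1$. Restricting the integral defining $\mathcal{H}_\mu f_a(a)$ to $t\in[a,1)$ and using $1-at\le 2(1-a)$ there yields the cheap lower bound $\mathcal{H}_\mu f_a(a)\ge C(1-a)^{-1/p-1}\mu([a,1))$. For (iii), combining this with $|h(a)|\le C(1-a)^{-1/q}\|h\|_{H^q}$ forces $\mu([a,1))\le C(1-a)^{1/p+1/q'}$, which for measures on $[0,1)$ is equivalent to $\mu$ being $\bigl(\frac1p+\frac1{q'}\bigr)$-Carleson. \emph{The main obstacle is necessity in (ii):} the $H^1$ pointwise estimate $|h(a)|\le C(1-a)^{-1}\|h\|_{H^1}$ loses the decisive log factor. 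To recover it, pair $\mathcal{H}_\mu f_a$ against $g_a(z)=\log\frac{2}{1-az}$, which lies in $BMOA$ with $\|g_a\|_{BMOA}$ uniformly bounded in $a$, via $(H^1)^*=BMOA$ and the identity $\langle\mathcal{H}_\mu f_a,g_a\rangle=\int f_a(t)\overline{g_a(t)}\,d\mu(t)$: the left side is $\le C$, while restricting the right side to $t\in[a,1)$ gives the lower bound $\ge C(1-a)^{-1/p}\log\frac{2}{1-a}\cdot\mu([a,1))$, delivering $\mu([a,1))\log\frac{2}{1-a}\le C(1-a)^{1/p}$ as required.
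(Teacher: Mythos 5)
Your argument is correct, but it takes a genuinely different route from the paper at several points, mostly gaining in simplicity. The paper proves (i) by identifying $B_q$ with the dual of a subspace of $H^\infty$, and proves the sufficiency half of (ii) through the Fefferman duality $(H^1)^\star\cong BMOA$, which obliges it to show that $|g(rz)|\,d\mu(z)$ is a $1/p$-Carleson measure uniformly over $g\in VMOA$ --- that is where the splitting $g(rz)=g(ra)+[g(rz)-g(ra)]$, the Bloch growth estimate and the conformal invariance of $BMOA$ all enter. You replace both of these by the single pointwise estimate $M_1(r,\mathcal H_\mu f)\lesssim\int_{[0,1)}|f(t)|\log\frac{2}{1-tr}\,d\mu(t)$, which yields (i) at once (the weight $(1-r)^{1/q-2}$ absorbs the logarithm since $1/q-2>-1$) and reduces the sufficiency in (ii) to Duren's embedding theorem applied to $d\nu(t)=\log\frac{2}{1-t}\,d\mu(t)$, with no $BMOA$ machinery at all on that side. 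Similarly, for the necessity in (iii) you use the growth estimate $|h(a)|\lesssim(1-a)^{-1/q}\Vert h\Vert_{H^q}$ on the test family rather than the paper's pairing against $\bigl((1-|a|^2)/(1-\bar a z)^2\bigr)^{1/q'}$ via Lemma~\ref{le:zh}; both are legitimate. Your necessity argument in (ii) and your sufficiency argument in (iii) (H\"older with the exponents $1+p/q'$ and $1+q'/p$) coincide with the paper's.

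One step is asserted where it needs a short proof: you say the $1$-logarithmic $\frac1p$-Carleson condition on $\mu$ ``is exactly'' the $\frac1p$-Carleson condition on $\nu$. Only one implication is immediate, namely $\nu([t,1))\ge\log\frac{2}{1-t}\,\mu([t,1))$, and that is the direction you do \emph{not} use. The direction you need (log-Carleson for $\mu$ implies Carleson for $\nu$) follows from an integration by parts:
\begin{equation*}
\nu([t,1))=\log\tfrac{2}{1-t}\,\mu([t,1))+\int_t^1\frac{\mu([u,1))}{1-u}\,du
\lesssim(1-t)^{1/p}+\int_t^1\frac{(1-u)^{1/p-1}}{\log\frac{2}{1-u}}\,du\lesssim(1-t)^{1/p},
\end{equation*}
so the claim is true, but it should be recorded. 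Two further cosmetic points: in (iii) the reason $\alpha=1+p/q'\ge p$ is simply $p\le1\le\alpha$ (your parenthetical ``because $q\ge p$'' is not the relevant hypothesis), and in the necessity arguments one should note that for measures supported on $[0,1)$ the bound $\mu([a,1))\lesssim(1-a)^{s}$ for real $a$ is equivalent to the Carleson-box condition for all $a\in\D$. None of this affects the validity of the proof.
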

\par\medskip Let us state next our results for $p>1$.
\begin{theorem}\label{Def:p>1}
Suppose that $1<p<\infty $ and let $\mu $ be a positive Borel
measure on $[0,1)$. Then:
\par (i) $I_\mu (f)$ is a well defined analytic function in $\D $
for any $f\in H^p$ if and only if $\mu $ is an $1$-Carleson measure
for $H^p$, or, equivalently, if and only if
\begin{equation}\label{int-car-1p}\int_0^1\,\left (\int_0^{1-s}\,\frac{d\mu (t)}{1-t}\right
)^{p^\prime }\,ds\,<\,\infty .\end{equation}
\par (ii) If $\mu $ satisfies
(\ref{int-car-1p}) then
 $\mathcal H_\mu (f)$ is also a well defined analytic function in $\D
 $,
whenever $f\in H^p$, and
 $$\mathcal H_\mu (f)=I_\mu (f),\quad\text{for every
$f\in H^p$.}$$
\end{theorem}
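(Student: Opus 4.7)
My plan is to handle parts~(i) and~(ii) in sequence, with part~(i) further split into two reductions. The single recurring estimate is that for $t\in[0,1)$ and $z\in\D$ one has $|1-tz|\ge 1-|z|$, so that whenever $\mu$ is an $(H^p,1)$-Carleson measure (i.e.\ $H^p\hookrightarrow L^1(\mu)$), every integral $\int_{[0,1)}|f(t)/(1-tz)|\,d\mu(t)$ is bounded by $(1-|z|)^{-1}\|f\|_{H^p}$ uniformly on compact subsets of $\D$.

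For part~(i), I would first show that the well-definedness of $I_\mu(f)$ as an analytic function on $\D$ for every $f\in H^p$ is equivalent to $\mu$ being a $1$-Carleson measure for $H^p$. The direction Carleson $\Rightarrow$ well-defined is immediate from the bound above combined with a Fubini/Morera argument using the expansion $\frac{1}{1-tz}=\sum_n (tz)^n$. Conversely, if $I_\mu(f)$ is defined at $z=0$ for every $f\in H^p$, then $\int_{[0,1)}f\,d\mu$ is a finite complex number for every $f\in H^p$, which, interpreted as the Lebesgue integral of a complex-valued function, forces $\int_{[0,1)}|f|\,d\mu<\infty$. Thus $H^p\subset L^1(\mu)$ set-theoretically, and the closed graph theorem applied to the inclusion (closedness follows because $H^p$-convergence implies uniform convergence on compact subsets of $\D$, hence pointwise convergence on $[0,1)$, which matches $\mu$-a.e.\ subsequences of $L^1(\mu)$-convergent sequences) yields boundedness of the embedding, i.e., the Carleson property. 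The remaining equivalence between being a $1$-Carleson measure for $H^p$ and the integrability condition~(\ref{int-car-1p}) I would not reprove but invoke: it is the specialization to measures on $[0,1)$ of the Luecking--Videnskii characterization of $q$-Carleson measures for $H^p$ when $q<p$, here with $q=1$ and exponent $p/(p-q)=p'$, as recorded in \cite{Lu90,Vid,Bl-Ja}. Because $\mu(S(a))$ depends only on $|a|$ for $a$ within a bounded angular opening (and vanishes otherwise), the general $L^{p'}$-integrability of the Poisson balayage of $\mu$ reduces to~(\ref{int-car-1p}).

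For part~(ii), assuming~(\ref{int-car-1p}), I fix $z\in\D$ and apply Fubini to the series $\frac{1}{1-tz}=\sum_n (tz)^n$ inside the integral defining $I_\mu(f)$, obtaining
\[
I_\mu(f)(z)=\sum_{n=0}^{\infty} z^n\int_{[0,1)} t^n f(t)\,d\mu(t).
\]
To identify the $n$-th coefficient with $\sum_k a_k\mu_{n+k}$, the hypothesis $p>1$ enters decisively: by M.~Riesz's theorem the Taylor polynomials $f_N=\sum_{k=0}^N a_k z^k$ converge to $f$ in $H^p$-norm, and therefore in $L^1(\mu)$ by the Carleson inequality, whence
\[
\int_{[0,1)} t^n f(t)\,d\mu(t)=\lim_{N\to\infty}\sum_{k=0}^{N} a_k\mu_{n+k}=\sum_{k=0}^{\infty} a_k\mu_{n+k},
\]
the convergence of the series being a byproduct. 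Substituting back shows $\h_\mu(f)=I_\mu(f)$ on $\D$, and in particular $\h_\mu(f)$ is well defined and analytic.

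The main obstacle is conceptual rather than computational: on the one hand, one must fix the correct reading of ``well defined'' in the hypothesis of~(i) so that the closed graph argument applies; on the other, and more substantially, one must appeal to the nontrivial Luecking--Videnskii theorem to turn the abstract Carleson property into the concrete form~(\ref{int-car-1p}). The surrounding steps (Fubini, closed graph, polynomial approximation) are routine but make essential use of $p>1$, the only place where the dichotomy with Theorem~\ref{Def:p<1} materializes.
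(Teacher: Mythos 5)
Your proposal is correct and follows essentially the same route as the paper: the Carleson embedding $H^p\hookrightarrow L^1(d\mu)$ plus a Fubini/power-series expansion for sufficiency, the Luecking--Videnskii characterization (Theorem~\ref{th-q-car-rad}) for the equivalence with \eqref{int-car-1p}, and convergence of Taylor partial sums in $H^p$ (M.~Riesz) to identify $\sum_k\mu_{n,k}a_k$ with $\int_{[0,1)}t^nf(t)\,d\mu(t)$ in part~(ii). The only (harmless) deviations are in the converse of~(i), where you read $\int_{[0,1)}|f|\,d\mu<\infty$ directly off the Lebesgue-integral interpretation of well-definedness at $z=0$ and then apply the closed graph theorem, whereas the paper extracts integrability via the nonnegative majorant of Proposition~\ref{may-Hp} and gets boundedness from the uniform boundedness principle applied to the truncations $T_\beta$.
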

\par\medskip
\begin{theorem}\label{bound:p>1}
Suppose that $1<p<\infty $ and let $\mu $ be a positive Borel
measure on $[0,1)$ which satisfies .
\begin{itemize}\item[(i)] If $0<p\le q<\infty $, then $\mathcal H_\mu $
is a bounded operator from $H^p$ to $H^q$ if and only if $\mu $ is
an $\frac{1}{p}+\frac{1}{q^\prime }$-Carleson measure.
\item[(ii)] If $1<q<p$, then $\mathcal H_\mu $
is a bounded operator from $H^p$ to $H^q$ if and only if the
function defined by \,\,$s\mapsto \int_0^{1-s}\,\frac{d\mu
(t)}{1-t}$\,\, $(s\in [0,1))$\, belongs to $L^{\left
(\frac{pq^\prime }{p+q^\prime }\right )^\prime }([0,1))$.
\item[(iii)] $\mathcal H_\mu $
is a bounded operator from $H^p$ to $H^1$ if and only if the
function defined by \,\,$s\mapsto
\int_0^{1-s}\,\frac{\log\frac{1}{1-t}d\mu (t)}{1-t}$\,\, $(s\in
[0,1))$\, belongs to $L^{p^\prime }([0,1))$.
\item[(iv)] If $0<q<1$, then $\mathcal H_\mu $
is a bounded operator from $H^p$ into $B_q$.
\end{itemize}
\end{theorem}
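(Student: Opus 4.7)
Under (\ref{int-car-1p}), Theorem~\ref{Def:p>1}(ii) identifies $\mathcal H_\mu f$ with the Cauchy-type integral $I_\mu f(z) = \int_{[0,1)} f(t)/(1-tz)\,d\mu(t)$, so I work throughout with this representation. For $q>1$ the pivotal identity is
\begin{equation*}
\int_0^{2\pi} I_\mu f(e^{i\theta})\,\overline{h(e^{i\theta})}\,\frac{d\theta}{2\pi} \;=\; \int_{[0,1)} f(t)\,\overline{h(t)}\,d\mu(t), \qquad f \in H^p,\ h \in H^{q'},
\end{equation*}
obtained by a Fubini/Taylor-series computation that uses the realness of $t$. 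By $H^q$--$H^{q'}$ duality, boundedness $\mathcal H_\mu: H^p \to H^q$ then reduces to the bilinear estimate $|\int f\overline{h}\,d\mu| \lesssim \|f\|_{H^p}\|h\|_{H^{q'}}$.

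Setting $1/r = 1/p + 1/q'$, the Hardy-space inner--outer factorization (outer factors with $|f|=|\phi|^{r/p}$ and $|h|=|\phi|^{r/q'}$, any Blaschke/singular inner part absorbed into $f$) shows that every $\phi\in H^r$ factors as $\phi = fh$ with $\|f\|_{H^p}\|h\|_{H^{q'}} \lesssim \|\phi\|_{H^r}$, while the reverse inequality $\|fh\|_{H^r}\le\|f\|_p\|h\|_{q'}$ is H\"older on $\mathbb{T}$. The bilinear estimate therefore collapses to the \emph{linear} condition that $\mu$ be a $1$-Carleson measure for $H^r$. In case~(i), $p\le q$ gives $r\le 1$, and Carleson's theorem recasts this as $\mu$ being $(1/r)=(1/p+1/q')$-Carleson. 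In case~(ii), $q<p$ gives $r>1$, and Theorem~\ref{Def:p>1}(i) translates it into $K_\mu(s):=\int_0^{1-s}d\mu(t)/(1-t) \in L^{r'}([0,1))$; a direct calculation yields $r' = pq/(p-q) = (pq'/(p+q'))'$, which is the stated exponent.

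For case~(iii) the target $H^1$ has dual $BMOA$, so the pairing reduction is unavailable. I argue directly: Minkowski's integral inequality on the circle together with $\|(1-t\cdot)^{-1}\|_{H^1} \asymp \log(e/(1-t))$ yields
\begin{equation*}
\|I_\mu f\|_{H^1} \;\lesssim\; \int_{[0,1)} |f(t)|\,\log\frac{e}{1-t}\,d\mu(t),
\end{equation*}
so with $d\tilde\mu(t):=\log(e/(1-t))\,d\mu(t)$, sufficiency reduces to $\tilde\mu$ being a $1$-Carleson measure for $H^p$, equivalently $K_{\tilde\mu}\in L^{p'}$ by Theorem~\ref{Def:p>1}(i). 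Necessity is the sharpness of the logarithm: test against normalized outer functions peaked at $a\in[0,1)$ so that the $H^1$-size of the Cauchy kernel near $a$ transfers to a lower bound on the log-weighted measure integral.

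For case~(iv) I target the containing Banach space $B_q$. Minkowski in $M_1(r,\cdot)$ combined with the elementary estimate $\int_0^1 (1-r)^{1/q-2}\log(e/(1-tr))\,dr \lesssim (1-t)^{1/q-1}$ gives, after Fubini,
\begin{equation*}
\|I_\mu f\|_{B_q} \;\lesssim\; \int_{[0,1)} |f(t)|(1-t)^{1/q-1}\,d\mu(t) \;\le\; \int_{[0,1)} |f(t)|\,d\mu(t) \;\lesssim\; \|f\|_{H^p},
\end{equation*}
where the middle step uses $1/q>1$ and the last uses (\ref{int-car-1p}). The principal obstacle is the factorization--duality reduction in (i)--(ii): once it is in place, the Carleson characterizations (classical for $r\le 1$, Theorem~\ref{Def:p>1}(i) for $r>1$) deliver both cases at once. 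The subtlest point is then (iii), where the logarithm in $K_{\tilde\mu}$ must be shown \emph{necessary}, and the right test functions must mirror the $H^1$-size of the Cauchy kernel.
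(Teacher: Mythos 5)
Your reduction in (i)--(ii) to the embedding of $H^r$, $1/r=1/p+1/q'$, into $L^1(d\mu)$ via duality and inner--outer factorization is essentially the paper's argument (the paper phrases it as $H^{2pq'/(p+q')}\hookrightarrow L^2(d\mu)$, which is the same condition via Theorem~\ref{th-q-car-rad}), and your exponent computation $r'=(pq'/(p+q'))'$ is correct. One detail you gloss over: to pass from the pairing bound $\bigl|\int f\overline{h}\,d\mu\bigr|\lesssim\|f\|_{H^p}\|h\|_{H^{q'}}$ to the modulus bound $\int|f||h|\,d\mu\lesssim\|f\|_{H^p}\|h\|_{H^{q'}}$ you need more than ``absorbing the inner part into $f$'': the conjugation can cause cancellation, and the paper handles this with Proposition~\ref{may-Hp}, replacing $f,h$ by zero-free majorants that are positive on $(0,1)$. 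Your direct Minkowski argument for (iv) also works and is a bit more elementary than the paper's use of the duality $B_q\cong X^{*}$ with $X\subset H^\infty$, although your intermediate estimate is off: $\int_0^1(1-r)^{1/q-2}\log\frac{e}{1-tr}\,dr\asymp 1$ uniformly in $t$, not $\lesssim(1-t)^{1/q-1}$; fortunately a uniform bound is all that is needed.

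The genuine gap is the necessity direction of (iii). Testing against outer functions peaked at single points $a\in[0,1)$ can only yield a Carleson-box condition such as $\int_{[a,1)}\log\frac{e}{1-t}\,d\mu(t)\lesssim(1-a)^{1/p}$, i.e., that $d\nu(t)=\log\frac{1}{1-t}\,d\mu(t)$ is a $1/p$-Carleson measure. What must be proved is that $\nu$ is a $1$-Carleson measure for $H^p$ with $p>1$, which by Theorem~\ref{th-q-car-rad} is the integrated condition $s\mapsto\int_0^{1-s}\frac{d\nu(t)}{1-t}\in L^{p'}([0,1))$; for target exponent below $p$ such embeddings are not characterized by box conditions. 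Indeed the box condition only gives $\int_0^{1-s}\frac{d\nu(t)}{1-t}\lesssim s^{\frac{1}{p}-1}$, and $(\frac{1}{p}-1)p'=-1$, so this majorant just fails to lie in $L^{p'}$. The paper's necessity argument instead tests the $H^1$--$BMOA$ duality pairing against an \emph{arbitrary} $f\in H^p$ (replaced by its positive majorant $F$ from Proposition~\ref{may-Hp}) and the fixed family $g_\rho(z)=\log\frac{1}{1-\rho z}$, which is uniformly bounded in $BMOA$; letting $\rho\to1$ and $r\to1$ yields $\int_{[0,1)}|f(t)|\log\frac{1}{1-t}\,d\mu(t)\lesssim\|f\|_{H^p}$ for every $f\in H^p$, which is the full embedding. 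You need this global test, not pointwise peaked functions.
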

\par\medskip
Let us remark that both if either $0<p\le 1$ and $\mu $ is an
$1/p$-Carleson measure, or if $1<p<\infty $ and $\mu $ satisfies
(\ref{int-car-1p}), we have that $\mu $ is an $1$-Carleson measure
for $H^p$. By the closed graph theorem it follows
that, for any $q>0$,
$$\mathcal H_\mu (H^p)\subset H^q\,\,\,\Leftrightarrow\,\,\,
\mathcal H_\mu \,\,\,\text{is a bounded operator form $H^p$ into
$H^q$.}$$
\par\medskip Substitutes of Theorem\,\@\ref{bound:p<1} and
Theorem\,\@\ref{bound:p>1} regarding compactness will be stated and
proved in Section\,\@\ref{sect-compact}.

\par\medskip Finally, we address the question of describing those measures $\mu$ such that $\h_\mu$ belongs to the Schatten class
$\mathcal S_p(H^2)$, ($1<p<\infty $). Given a separable Hilbert
space $X$ and
 $0<p<\infty$, let $\mathcal{S}_ p(X)$  denote the
Schatten $p$-class of operators on $X$. The class $\mathcal{S}_
p(X)$ consists of those compact operators $T$ on $X$ whose sequence
of singular numbers $\{ \lambda_ n\} $ belongs to $\ell^p$, the
space of $p$-summable sequences. It is well known that, if $\lambda_
n$ are the singular numbers of an operator $T$, then
\begin{displaymath}
\lambda_ n=\lambda_ n(T)=\inf \{\|T-K\|: \textrm{ rank}\,K\leq n\}.
\end{displaymath}
Thus finite rank operators belong to every $\mathcal{S}_ p(X)$, and
the membership of an operator in $\mathcal{S}_ p(X)$ measures in
some sense the size of the operator. In the case when $1\leq
p<\infty$, $\mathcal{S}_ p(X)$ is a Banach space with the norm
\begin{displaymath}
\|T\|_ p=\left(\sum_ n |\lambda_ n|^p\right )^{1/p},
\end{displaymath}
while for $0<p<1$ we have the following inequality $\|
T+S\|_p^p\le\|T\|_ p^p+\|S\|_ p^p.$ We refer to \cite{Zhu} for more
information about  $\mathcal{S}_ p(X)$.

\par\medskip Galanopoulos
and Pel\'{a}ez \cite[Theorem\,\@1.\,\@6]{Ga-Pe2010} found a
characterization of those $\mu $ for which $\h_\mu $ is a
Hilbert-Schmidt operator on $H^2$ improving a result of \cite{Pow}.
In \cite[p. $239$, Corollary\,\@2.\,\@2]{Pell} it is proved that,
for $1<p<\infty $, $\mathcal H_\mu \in \mathcal S_p(H^2)$ if and
only if $h_\mu(z)=\sum_{n=1}^\infty \mu_{n+1}z^{n}$ belongs to the
Besov space $B^p$ (see \cite[Chapter\,\@5]{Zhu})  of those analytic
functions $g$ in $\D$ such that
$$||g||^p_{B^p}=|g(0)|^p+\int_\D |g'(z)|^p(1-|z|^2)^{p-2}\,dA(z)<\infty.$$
  We simplify  this result
 describing the membership of
$\mathcal H_\mu $  in the Schatten class $\mathcal S_p(H^2)$
 in terms of the moments $\mu_n$.
\begin{theorem}\label{th:Schatten}
Assume that $1<p<\infty$ and let $\mu$ be a positive Borel measure
on $[0,1)$. Then, $\mathcal H_\mu\in \mathcal S_p(H^2)$ if and only
if $\sum_{n=0}^\infty (n+1)^{p-1}\mu_n^p<\infty.$
\end{theorem}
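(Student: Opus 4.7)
The plan is to use the characterization already recorded in the paragraph preceding the theorem: by \cite[p.\,$239$, Corollary~$2.2$]{Pell}, for $1<p<\infty$ one has $\mathcal{H}_\mu\in\mathcal{S}_p(H^2)$ if and only if $h_\mu\in B^p$, where $h_\mu(z)=\sum_{n=1}^\infty\mu_{n+1}z^n$. It then remains to translate the Besov condition $h_\mu\in B^p$ into a condition on the moment sequence.

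A crucial observation is that, since $\mu$ is positive on $[0,1)$, the moments $\mu_n=\int_0^1 t^n\,d\mu(t)$ form a nonnegative decreasing sequence (indeed $\mu_{n+1}-\mu_n=\int t^n(t-1)\,d\mu\le 0$). Thus $h_\mu$ has nonnegative decreasing Taylor coefficients. The theorem then reduces to the following coefficient lemma: \emph{for $g(z)=\sum_{n\ge 1}a_n z^n$ with $a_n\ge 0$ decreasing and $1<p<\infty$, one has $g\in B^p$ if and only if $\sum_{n\ge 1}n^{p-1}a_n^p<\infty$.} Applying this to $g=h_\mu$ with $a_n=\mu_{n+1}$ and reindexing yields exactly $\sum_{n\ge 0}(n+1)^{p-1}\mu_n^p<\infty$.

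To prove the coefficient lemma, I would appeal to the dyadic Littlewood--Paley characterization of the analytic Besov space: $\|g\|_{B^p}^p\asymp\sum_{k\ge 0}2^k\|\Delta_k g\|_{H^p}^p$, where $\Delta_k g=\sum_{n\in I_k}a_n z^n$ and $I_k=[2^k,2^{k+1})$. Monotonicity of $(a_n)$ forces $a_n\asymp a_{2^k}$ on $I_k$, so $\Delta_k g$ is comparable to the Dirichlet-type polynomial $a_{2^k}\cdot z^{2^k}\sum_{j=0}^{2^k-1}z^j$, whose $H^p$-norm is $\asymp a_{2^k}\,2^{k(1-1/p)}$ for $1<p<\infty$ by the classical $L^p$ estimate for the analytic Dirichlet kernel. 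Summing,
\begin{equation*}
\|g\|_{B^p}^p\asymp\sum_{k\ge 0} 2^{kp}a_{2^k}^p\asymp\sum_{n\ge 1}n^{p-1}a_n^p,
\end{equation*}
where the last step is the standard dyadic equivalence for monotone sequences.

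The main technical obstacle is the comparison $\|\Delta_k g\|_{H^p}\asymp a_{2^k}\,2^{k(1-1/p)}$, which requires the Dirichlet-kernel $L^p$ bound together with the positivity and monotonicity of $(a_n)$ to produce matching lower bounds. An alternative route more in the spirit of the rest of the paper is to use the integral representation $h_\mu'(z)=\int_0^1 t^2(1-tz)^{-2}\,d\mu(t)$ and to estimate $\int_\D|h_\mu'(z)|^p(1-|z|^2)^{p-2}\,dA(z)$ via a dyadic splitting of $\mu$ on the shells $[1-2^{-k},1-2^{-k-1})$, combining the standard formula $\int_\D(1-|z|^2)^{p-2}|1-tz|^{-2p}\,dA\asymp(1-t)^{-p}$ with a positivity argument to match both bounds in terms of the moments.
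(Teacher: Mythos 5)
Your proposal follows essentially the same route as the paper's proof: reduce via Peller's Corollary~2.2 to the condition $h_\mu\in B^p$, apply the dyadic $H^p$-block characterization of the Besov norm (the paper cites Mateljevi\'c--Pavlovi\'c), use the monotonicity of the moments to compare each block with the constant-coefficient block (the paper invokes Lemma~3.4 of Lanucha--Nowak--Pavlovi\'c), and establish $\bigl\|\sum_{k=2^n}^{2^{n+1}-1}z^{k}\bigr\|_{H^p}^p\asymp 2^{n(p-1)}$ by the Dirichlet-kernel estimate together with the Riesz projection and an $M_\infty$--$M_p$ comparison. One small correction: monotonicity does \emph{not} force $a_n\asymp a_{2^k}$ throughout a dyadic block (consider $a_n=e^{-n}$); the correct two-sided comparison has $a_{2^k}$ in the upper bound and $a_{2^{k+1}}$ in the lower bound, which still gives the claimed equivalence after the standard dyadic resummation for monotone sequences.
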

\par\bigskip
Throughout the paper, the letter $C=C(\cdot)$ will denote a constant
whose value depends on the parameters indicated in the parenthesis
(which often will be omitted), and may change from one occurrence to
another. We will use the notation $a\lesssim b$ if there exists
$C=C(\cdot)>0$ such that $a\le Cb$, and $a\gtrsim b$ is understood
in an analogous manner. In particular, if $a\lesssim b$ and
$a\gtrsim b$, then we will write $a\asymp b$.

\section{Preliminary results} In this section we shall collect a
number of results which will be needed in our work. We start
obtaining a characterization of $s$-Carleson measures in terms of
the moments.
\begin{proposition}\label{moments-cond} Let $\mu $ be a positive Borel measure on
$[0,1)$ and $s>0$. Then $\mu $ is an $s$-Carleson measure if and
only if the sequence of moments $\{ \mu _n\} _{n=0}^\infty $
satisfies
\begin{equation}\label{cond-moments} \sup _{n\ge
0}\,(1+n)^s\,\mu_n<\infty .\end{equation}
\end{proposition}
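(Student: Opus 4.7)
The plan is to exploit the fact that since $\mu$ is supported on the interval $[0,1)$, the Carleson square condition reduces to a one-dimensional statement: for $r\in[0,1)$, the relevant Carleson box satisfies $S(r)\cap[0,1)=[r,1)$, so $\mu$ is an $s$-Carleson measure if and only if the tail $F(r)\ig \mu([r,1))$ satisfies $F(r)\lesssim(1-r)^s$. With this reformulation, both implications become elementary.

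For the forward direction, I would integrate by parts to express the moments in terms of the tail. Writing $F(t)=\mu([t,1))$ (a right-continuous decreasing function with $F(0)<\infty$ and $F(1^-)=0$ since $\mu$ charges only $[0,1)$), integration by parts gives, for every $n\ge 1$,
\[
\mu_n=\int_{[0,1)}t^n\,d\mu(t)=n\int_0^1 F(t)\,t^{n-1}\,dt.
\]
Inserting the hypothesis $F(t)\le C(1-t)^s$ and recognizing a Beta integral yields
\[
\mu_n\le Cn\int_0^1(1-t)^s t^{n-1}\,dt=Cn\,B(s+1,n)=C\,\frac{\Gamma(s+1)\Gamma(n+1)}{\Gamma(n+s+1)}\asymp n^{-s}.
\]
The case $n=0$ is trivial because $\mu_0=F(0)$ is a finite constant.

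For the converse, assume $(n+1)^s\mu_n\le C$ for every $n\ge 0$. Given $r\in[1/2,1)$, choose $n=\lfloor 1/(1-r)\rfloor$ so that $n(1-r)\asymp 1$. Since $t\ge r$ implies $(t/r)^n\ge 1$,
\[
\mu([r,1))\le\int_{[r,1)}\Big(\frac{t}{r}\Big)^{n}\,d\mu(t)\le\frac{\mu_n}{r^n}\le\frac{C}{r^n(n+1)^s}.
\]
For $r\ge 1/2$ we have $r^n\ge(1-(1-r))^{1/(1-r)}\ge e^{-2}$, while $(n+1)^s\asymp(1-r)^{-s}$, so $\mu([r,1))\lesssim(1-r)^s$. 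For $r<1/2$ the estimate is trivial since $\mu([r,1))\le\mu_0$ is bounded by a constant, and $(1-r)^s\ge 2^{-s}$. Hence $\mu$ satisfies the $s$-Carleson condition.

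The only mildly technical point is justifying the integration-by-parts identity for a general positive Borel measure on $[0,1)$; this is standard (apply it to the distribution function of $\mu$ restricted to $[0,r]$ and let $r\to 1^-$, using monotone convergence and the fact that $t^n F(t)\to 0$ as $t\to 1^-$). Everything else reduces to the Beta-function asymptotics $\Gamma(n+1)/\Gamma(n+s+1)\asymp n^{-s}$ and a one-line exponential estimate.
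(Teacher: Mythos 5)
Your proof is correct. The paper itself omits the proof of this proposition (stating only that it is ``simple''), so there is nothing to compare against; your argument --- reducing the Carleson condition to the tail estimate $\mu([r,1))\lesssim(1-r)^s$, then using the distribution-function identity $\mu_n=n\int_0^1\mu([t,1))\,t^{n-1}\,dt$ with Beta-function asymptotics in one direction and the test function $t\mapsto(t/r)^n$ with $n\approx 1/(1-r)$ in the other --- is the standard one the authors presumably had in mind, and all the estimates check out.
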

The proof is  simple and will be omitted.
\par\medskip The following result, which may be of independent interest, asserts that for any function $f\in
H^p$ ($0<p<\infty $) we can find another one $F$ with the same
$H^p$-norm and which is non-negative and bigger than $\vert f\vert $
in the radius $(0,1)$.
\begin{proposition}\label{may-Hp} Suppose that $0<p<\infty $ and
$f\in H^p$, $f\not\equiv 0$. Then there exists a function $F\in H^p$
with $\Vert F\Vert _{H^p}= \Vert f\Vert _{H^p}$ and satisfying the
following properties:
\par (i) $F(r)>0$, for all $r\in (0, 1)$.
\par (ii) $\vert f(r)\vert \,\le \,F(r)$, for all $r\in (0, 1)$.
\par (iii) $F$ has no zeros in $\mathbb D$.
\end{proposition}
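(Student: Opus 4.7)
The plan is to build $F$ by symmetrizing the boundary modulus of $f$ and then correcting the $H^p$-norm by a harmless positive scaling. First, recall the Nevanlinna factorization: since $f\not\equiv 0$ lies in $H^p$, its boundary function $f^*$ satisfies $\log|f^*|\in L^1(\T)$, and $f=IO_f$, where $I$ is an inner function, $O_f$ is the outer factor with $|O_f^*|=|f^*|$ a.e., and in particular $|f(z)|\le|O_f(z)|$ for every $z\in\D$. The obstruction to simply setting $F=O_f$ is that $O_f(r)$ need not be real positive on $(0,1)$.

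To remove this obstruction I would set
$$\psi(\theta)=\tfrac12\bigl(\log|f^*(e^{i\theta})|+\log|f^*(e^{-i\theta})|\bigr),$$
an even $L^1$ function on $\T$, and define the outer function
$$G(z)=\exp\!\left(\frac{1}{2\pi}\int_0^{2\pi}\frac{e^{i\theta}+z}{e^{i\theta}-z}\,\psi(\theta)\,d\theta\right),\qquad z\in\D.$$
For $z=r\in(0,1)$, the imaginary part of $\frac{e^{i\theta}+r}{e^{i\theta}-r}$ is an odd function of $\theta$, so integration against the even $\psi$ annihilates it, making the exponent real and forcing $G(r)>0$; this gives (i), and (iii) is automatic since an outer function has no zeros in $\D$. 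Using in addition that the Poisson kernel $\frac{1-r^2}{|e^{i\theta}-r|^2}$ is even in $\theta$, a change of variables $\theta\mapsto-\theta$ in the second half of $\psi$ yields
$$G(r)=\exp\!\left(\frac{1}{2\pi}\int_0^{2\pi}\frac{1-r^2}{|e^{i\theta}-r|^2}\log|f^*(e^{i\theta})|\,d\theta\right)=|O_f(r)|\ge|f(r)|,$$
which is (ii). Finally, $|G^*|^p=|f^*(e^{i\theta})|^{p/2}|f^*(e^{-i\theta})|^{p/2}$, so the Cauchy--Schwarz inequality (and the rotation invariance of Lebesgue measure on $\T$) gives $\|G\|_{H^p}\le\|f\|_{H^p}$.

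To achieve the required norm equality I would set $F=\lambda G$ with $\lambda=\|f\|_{H^p}/\|G\|_{H^p}\ge 1$; the ratio is finite and positive because $f\not\equiv 0$ forces $|f^*|>0$ a.e., hence $\|G\|_{H^p}>0$. Multiplication by a positive constant $\ge 1$ preserves the positivity of $F(r)$ on $(0,1)$, the absence of zeros in $\D$, and the pointwise domination $|f(r)|\le F(r)$, while correcting the norm to exactly $\|f\|_{H^p}$. The conceptual core of the argument is the symmetrization trick, which is what allows $G(r)$ to be real positive without destroying the bound $G(r)\ge|f(r)|$; I do not anticipate a serious obstacle beyond the simultaneous bookkeeping of the three invariants $G(r)>0$, $G(r)\ge|f(r)|$, and $\|G\|_{H^p}\le\|f\|_{H^p}$, since standard outer-function theory handles everything else.
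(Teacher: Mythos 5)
Your proof is correct, and it takes a genuinely different route from the paper's. You work on the boundary: you symmetrize $\log|f^*|$ into the even function $\psi$, form the corresponding outer function $G$, and verify that the evenness of $\psi$ kills the odd imaginary part of the Herglotz kernel at real points (so $G(r)>0$ and $G$ is zero-free), that $G(r)=|O_f(r)|\ge |f(r)|$ by the same symmetry applied to the Poisson kernel, and that Cauchy--Schwarz on the boundary moduli gives $\|G\|_{H^p}\le\|f\|_{H^p}$; the final rescaling by $\lambda\ge 1$ legitimately restores the exact norm while preserving (i)--(iii). The paper argues instead through Taylor coefficients: for $p=2$ it replaces $f=\sum a_nz^n$ by $G=\sum |a_n|z^n$ (same $H^2$-norm, positive on $(0,1)$, real coefficients), observes that the zeros of $G$ therefore split into conjugate pairs and negative reals so that the associated Blaschke product is positive on $(0,1)$, and takes $F$ to be the zero-free Riesz factor $G/B$; the general case is reduced to $p=2$ by applying this to $g^{p/2}$, where $g$ is the zero-free factor of $f$. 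Your approach handles all $p$ uniformly, avoids the case analysis of the zero set, and hits the exact norm only after a harmless rescaling, at the price of invoking inner--outer factorization and the fact that an outer function lies in $H^p$ exactly when its boundary modulus is in $L^p$; the paper's approach is more elementary for $p=2$ and produces $F$ with the exact norm directly. Both arguments are complete.
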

\begin{proof}
Let us consider first the case $p=2$. So, take
$f(z)=\sum_{n=0}^\infty a_nz^n\,\in H^2$, $f\not \equiv 0$. Set
$G(z)=\sum _{n=0}^\infty \vert a_n\vert z^n$ ($z\in \D $). Then
$G\in H^2$ and $\Vert G\Vert_{H^2}=\Vert f\Vert_{H^2}$. Furthermore,
we have: \begin{equation}\label{G(r)}0\le \vert f(r)\vert \le
G(r)\,\,\,\text{and $G(r)>0$},\quad \text{for all $r\in (0,
1)$},\end{equation} and
\begin{equation}\label{G-real-coef}G(\overline z)=\overline
{G(z)},\quad z\in \D .\end{equation} By (\ref{G(r)}) and
(\ref{G-real-coef}) we  see that the sequence $\{ z_n\} $ of
the zeros of $G$ with $z_n\neq 0$ (which is a Blaschke sequence) can
be written in the form $\{ z_n\} =\{ \alpha _n\} \cup \{ \overline
{\alpha _n}\} \cup \{ \beta _n\} $ where $\Imag (\alpha_n)>0$ and
$-1<\beta _n<0$. Then the Blaschke product $B$ with the same zeros
that $G$ is
$$B(z)=z^m\prod \left (\frac{\alpha _n-z}{1-\overline {\alpha
_n}z}\frac{\overline {\alpha _n}-z}{1-\alpha _nz}\right )\prod
\frac{z-\beta_n}{1-\beta_nz},$$ where $m$ is the order of $0$ as
zero of $G$ (maybe $0$). Using the Riesz factorization theorem
\cite[Theorem\,\@2.\,\@5]{D}, we can factor $G$ in the form
$G=B\cdot F$ where $F$ is an $H^2$-function with no zeros and with
$\Vert f\Vert _{H^2}=\Vert G\Vert _{H^2}=\Vert F\Vert _{H^2}$.
Notice that $B(r)>0$, for all $r\in (0,1)$.  This together with
(\ref{G(r)}) gives  that $F(r)>0$, for all $r\in (0,1)$. Finally
since $\vert B(z)\vert \le 1$, for all $z$, we have that $G(r)\le
F(r)$ ($r\in (0,1)$) and then (\ref{G(r)}) implies $\vert f(r)\vert
\le F(r)$ ($r\in (0,1)$). This finishes the proof in the case $p=2$.
\medskip\par If $0<p<\infty $ and $f\in H^p$, $f\not \equiv 0$, write $f$ in
the form $f=B\cdot g$ where $B$ is a Blaschke product and $g$ is and
$H^p$-function without zeros and with $\Vert g\Vert _{H^p}=\Vert
f\Vert _{H^p}$. Now $g^{p/2}\in H^2$. By the previous case, we have
a function $G\in H^2$ without zeros, which take positive values in
the radius $(0,1)$, and satisfying $\Vert G\Vert _{H^2}=\Vert
g^{p/2}\Vert _{H^2}$ and $\vert g(r)\vert ^{p/2}\le G(r)$, for all
$r\in (0,1)$. It is clear that the function $F=G^{2/p}$ satisfies
that conclusion of Proposition~\ref{may-Hp}.
\end{proof}
\par\medskip
We shall also use the following description of $\alpha$-logarithmic
$s$-Carleson measures (see  \cite[Theorem\,\@2]{Zhao}).
\begin{otherl}\label{le:zh}
Suppose that $0\le \alpha<\infty$ and $0<s<\infty$ and $\mu$ is a
positive Borel measure in $\D$. Then $\mu$ is an
$\alpha$-logarithmic $s$-Carleson measure if and only if
\begin{equation}\label{ELC}
K_{\alpha,s}(\mu)\ig\sup_{a\in \D}\left(\log \frac{2}{1-|a|^2}\right
)^{\alpha}\,\int_{\D}\left (\frac{1-|a|^2}{|1-\bar{a}z|^2}\right
)^s\,d\mu(z)<\infty.
\end{equation}
When $\alpha =0$, the constant $K_{0,s}(\mu )$ will be simply
written as $K_s(\mu )$. We remark that, if $s\ge 1$, then $K_s(\mu
)$ is equivalent to the norm of the embedding \,\,$i:H^p\rightarrow
L^{ps}(d\mu )$ for any $p\in (0,\infty )$.
\end{otherl}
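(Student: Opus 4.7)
The plan is to establish the equivalence in two directions, with the work concentrated in the second.

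For the easy direction, suppose $K_{\alpha,s}(\mu)<\infty$. I would use the elementary fact that $|1-\bar a z|\asymp 1-|a|^2$ whenever $z\in S(a)$, which yields $\left(\frac{1-|a|^2}{|1-\bar a z|^2}\right)^s\asymp (1-|a|^2)^{-s}$ on $S(a)$. Restricting the integral in (\ref{ELC}) to $S(a)$ and discarding the rest gives
$$K_{\alpha,s}(\mu)\;\gtrsim\; \left(\log\frac{2}{1-|a|^2}\right)^{\alpha}\frac{\mu(S(a))}{(1-|a|^2)^s},$$
so taking the supremum over $a\in\D$ yields the $\alpha$-logarithmic $s$-Carleson condition.

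For the converse, assume $\mu$ is $\alpha$-logarithmic $s$-Carleson. I would perform a dyadic decomposition in $|1-\bar a z|$: fix $a\in\D$, set $R_0(a)=\{z\in\D:|1-\bar a z|\le 2(1-|a|^2)\}$, and for $k\ge 1$
$$R_k(a)=\{z\in\D:2^k(1-|a|^2)<|1-\bar a z|\le 2^{k+1}(1-|a|^2)\}.$$
Each $R_k(a)$ is contained in a bounded number of Carleson boxes $S(I)$ with $|I|\asymp 2^k(1-|a|^2)$, so as long as $2^k(1-|a|^2)\le 1/2$ the hypothesis gives
$$\mu(R_k(a))\;\lesssim\;\frac{\bigl(2^k(1-|a|^2)\bigr)^{s}}{\bigl(\log\frac{2}{2^k(1-|a|^2)}\bigr)^{\alpha}}$$
(for larger $k$ I simply use that $\mu(\D)<\infty$, which follows by applying the hypothesis at $a=0$). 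Since $\left(\frac{1-|a|^2}{|1-\bar a z|^2}\right)^s\lesssim 2^{-2ks}(1-|a|^2)^{-s}$ on $R_k(a)$, the matter reduces to bounding
$$\sum_{k\ge 0}\frac{2^{-ks}}{\bigl(\log\frac{2}{2^k(1-|a|^2)}\bigr)^{\alpha}}$$
uniformly in $a$.

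The main technical point is summing this series uniformly. Writing $L=\log\frac{2}{1-|a|^2}$, I would split at $k_0\asymp L/(2\log 2)$: for $k\le k_0$ the denominator is $\gtrsim L^{\alpha}$, so those terms contribute $\lesssim L^{-\alpha}\sum_k 2^{-ks}\lesssim L^{-\alpha}$; for $k_0<k\le L/\log 2$ there are $O(L)$ terms, each $\lesssim (1-|a|^2)^{s/2}$, so the total is $\lesssim L(1-|a|^2)^{s/2}$; and for $k>L/\log 2$, using $\mu(\D)<\infty$, the terms decay geometrically and contribute $\lesssim (1-|a|^2)^{s}$. All three pieces are $\lesssim L^{-\alpha}$ when $|a|$ is close to $1$, while for $|a|$ bounded away from $1$ the whole integral is trivially finite. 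Multiplying by $L^{\alpha}$ and taking the supremum over $a$ yields $K_{\alpha,s}(\mu)<\infty$. The delicate point is handling the transition where $2^k(1-|a|^2)$ crosses $1$ (where the logarithmic denominator would otherwise fail), which is exactly what forces the three-part split of the sum.
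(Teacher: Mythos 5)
Your argument is correct, but there is nothing in the paper to compare it against: the authors do not prove Lemma~A at all, they simply quote it from Zhao \cite[Theorem 2]{Zhao}. What you have written is essentially the standard (and, as far as I can tell, Zhao's own) proof. The easy direction via $|1-\bar a z|\asymp 1-|a|^2$ on $S(a)$ is fine. For the converse, the dyadic annuli $R_k(a)$ in $|1-\bar a z|$, the inclusion of $R_k(a)$ in $O(1)$ Carleson boxes of side $\asymp 2^k(1-|a|^2)$, and the kernel bound $2^{-2ks}(1-|a|^2)^{-s}$ on $R_k(a)$ are all standard and correct, and you have correctly identified the only delicate point, namely that the logarithmic factor $\log\frac{2}{2^k(1-|a|^2)}$ degenerates as $2^k(1-|a|^2)$ approaches $1$; your three-way split of the sum (small $k$, where the log is $\gtrsim L$; intermediate $k$, where $2^{-ks}\lesssim(1-|a|^2)^{s/2}$ beats the $O(L)$ number of terms; and the $O(1)$ remaining annuli, handled by $\mu(\D)<\infty$) disposes of it cleanly, since $L(1-|a|^2)^{s/2}+(1-|a|^2)^s=o(L^{-\alpha})$ as $|a|\to1$. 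One small omission: the final sentence of the statement, asserting that for $s\ge1$ the quantity $K_s(\mu)$ is comparable to the norm of the embedding $H^p\hookrightarrow L^{ps}(d\mu)$, is not addressed by your argument; it is the quantitative form of the Carleson--Duren theorem and would need a separate (also standard) justification if you intend your write-up to replace the citation entirely.
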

\par\bigskip
Next we recall the following useful characterization of $q$-Carleson
measures for $H^p$ in the case $0<q<p<\infty $ (see \cite{{Bl-Ja},
{Lu90}, {Vid}}). \par Take $\alpha $ with $0<\alpha <\frac{\pi
}{2}$. Given $s\in \mathbb R$, we let $\Gamma_\alpha (e^{is})$
denote the Stolz angle with vertex $e^{is}$ and semi-aperture
$\alpha $, that is, the interior of the convex hull of $\{ e^{is}\}
\cup \{ \vert z\vert <\sin \alpha \} $. If $\mu $ is a positive
Borel measure in $\D $, we define \lq\lq the $\alpha
$-balagaye\rq\rq \, $\tilde \mu_\alpha $ of $\mu $ as follows:
$$\tilde \mu_\alpha  (e^{is
})\,=\, \int_{\Gamma _\alpha (e^{is})}\,\frac{d\mu (z)}{1-\vert
z\vert },\quad s \in \mathbb R.$$
\begin{other}\label{th-q-car-gen} Let $\mu $ be a positive Borel measure on $\D $ and
$0<q<p<\infty $. Then $\mu $ is a $q$-Carleson measure for $H^p$ if
and only if $\tilde \mu _\alpha \in L^{\frac{p}{p-q}}(\partial \D )$
for some (equivalently, for all) $\alpha \in (0,\frac{\pi }{2})$.
\end{other}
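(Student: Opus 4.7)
The plan is to prove the two implications separately. The sufficient direction follows from Fubini's theorem and the non-tangential maximal function characterization of $H^p$, while the necessary direction is a Luecking-style test-function argument with Rademacher signs.

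For the sufficiency, fix $f\in H^p$ and let $N_\alpha f(e^{it})=\sup_{z\in\Gamma_\alpha(e^{it})}|f(z)|$, which satisfies $\|N_\alpha f\|_{L^p(\partial\D)}\asymp\|f\|_{H^p}$. The key geometric fact is that for each $z\in\D$, the arc $I(z)=\{e^{it}:z\in\Gamma_\alpha(e^{it})\}$ has $|I(z)|\asymp 1-|z|$ with constants depending only on $\alpha$. Since $|f(z)|\le N_\alpha f(e^{it})$ whenever $e^{it}\in I(z)$, averaging over $I(z)$ gives
\[
|f(z)|^q\le\frac{1}{|I(z)|}\int_{I(z)}(N_\alpha f)(e^{it})^q\,dt.
\]
Integrating against $d\mu(z)$ and invoking Fubini recovers $\tilde\mu_\alpha$ as a weight:
\[
\int_\D|f|^q\,d\mu\lesssim\int_{\partial\D}(N_\alpha f)^q(e^{it})\,\tilde\mu_\alpha(e^{it})\,dt.
\]
H\"older's inequality with conjugate exponents $p/q$ and $p/(p-q)$ then yields $\|f\|_{L^q(\mu)}^q\lesssim\|\tilde\mu_\alpha\|_{L^{p/(p-q)}(\partial\D)}\|f\|_{H^p}^q$, which is the desired $q$-Carleson inequality.

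For the converse, I would fix a Whitney-type decomposition $\D=\bigsqcup_j R_j$ with centers $z_j$, set $\lambda_j=\mu(R_j)$, and let $A_j$ be the boundary arc beneath $R_j$; a standard computation yields the pointwise equivalence $\tilde\mu_\alpha\asymp\sum_j\frac{\lambda_j}{1-|z_j|}\chi_{A_j}$. Introduce the randomized test functions $f_\epsilon(z)=\sum_j\epsilon_j c_j k_j(z)$, where $k_j(z)=\bigl((1-|z_j|^2)/(1-\bar z_j z)^2\bigr)^{1/p}$ are the normalized reproducing-kernel type blocks (with $\|k_j\|_{H^p}\asymp 1$ and $|k_j|\asymp(1-|z_j|)^{-1/p}$ on $R_j$), $c_j\ge 0$, and $\{\epsilon_j\}$ are Rademacher signs. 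Khinchine's inequality applied pointwise in $L^q(\mu)$, together with the disjointness of the $R_j$, yields $\mathbb E\|f_\epsilon\|_{L^q(\mu)}^q\gtrsim\sum_j c_j^q(1-|z_j|)^{-q/p}\lambda_j$, while a Khinchine/Kahane estimate on $\partial\D$ produces $(\mathbb E\|f_\epsilon\|_{H^p}^q)^{1/q}\lesssim(\sum_j c_j^p)^{1/p}$. Combining with the assumed $q$-Carleson inequality reduces matters to the sequence inequality $\sum_j c_j^q A_j\lesssim(\sum_j c_j^p)^{q/p}$ with $A_j=(1-|z_j|)^{-q/p}\lambda_j$, and $\ell^{p/q}$--$\ell^{p/(p-q)}$ duality delivers the bound $\sum_j\lambda_j^{p/(p-q)}(1-|z_j|)^{-q/(p-q)}<\infty$, which is comparable to $\|\tilde\mu_\alpha\|_{L^{p/(p-q)}(\partial\D)}^{p/(p-q)}$.

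The main obstacle is the converse direction: the Rademacher averaging in $H^p$ must be handled with care, since the estimate $\mathbb E\|f_\epsilon\|_{H^p}^p\asymp\int(\sum_j c_j^2|k_j|^2)^{p/2}\,d\theta$ behaves differently in the cases $p\le 2$ and $p\ge 2$, and Kahane's inequality is needed to equate different $L^r$ moments of $\|f_\epsilon\|_{H^p}$ and reach the clean $\ell^p$ bound. The translation between the discrete sum and the continuous $L^{p/(p-q)}$ norm of $\tilde\mu_\alpha$, and the equivalence of different apertures $\alpha$, are routine once the discretization is in place.
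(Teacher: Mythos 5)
The paper does not actually prove this statement: it appears as Theorem~A, quoted from Luecking, Videnskii and Blasco--Jarchow, so there is no internal proof to compare against. Your outline is the standard Luecking argument, and the sufficiency half is correct and complete: the Fubini step converting $\int_\D|f|^q\,d\mu$ into $\int_{\partial\D}(N_\alpha f)^q\,\tilde\mu_\alpha\,dt$, followed by H\"older with exponents $p/q$ and $p/(p-q)$, is exactly how this direction is done.

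The converse, however, has a genuine gap at the $H^p$ estimate for your test functions. By Khintchine and Fubini, $\mathbb E\|f_\epsilon\|_{H^p}^p\asymp\int_{\partial\D}\bigl(\sum_jc_j^2|k_j(e^{i\theta})|^2\bigr)^{p/2}d\theta$, and with $k_j(z)=\bigl((1-|z_j|^2)/(1-\bar z_jz)^2\bigr)^{1/p}$ the claimed bound by $\sum_jc_j^p$ fails once $p\ge4$. Indeed, take $c_j=1$ for the $\asymp 2^n$ boxes of a single Whitney generation ($1-|z_j|\asymp2^{-n}$) and $c_j=0$ otherwise: if $e^{i\theta}$ lies at arc-distance $\asymp k2^{-n}$ from $z_j$ then $|k_j(e^{i\theta})|^2\asymp2^{2n/p}k^{-4/p}$, so for $p>4$ the sum over $k$ contributes $2^{n(1-4/p)}$ and one gets $\int\bigl(\sum_j|k_j|^2\bigr)^{p/2}d\theta\asymp2^{n(p/2-1)}\gg2^n=\sum_jc_j^p$. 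This is not the $p\le2$ versus $p\ge2$ moment issue you flag, and Kahane's inequality does not repair it; the standard fix is to use higher-order kernels $k_j(z)=(1-|z_j|^2)^{(2m-1)/p}(1-\bar z_jz)^{-2m/p}$ with $m>p/4$, which still satisfy $\|k_j\|_{H^p}\asymp1$ and $|k_j|\asymp(1-|z_j|)^{-1/p}$ on $R_j$ but make the off-diagonal overlap $\sum_{j}k^{-4m/p}$ summable. A second, smaller point you dismiss as routine: passing from $\sum_j\lambda_j^{p/(p-q)}(1-|z_j|)^{-q/(p-q)}<\infty$ to $\tilde\mu_\alpha\in L^{p/(p-q)}$ is not a disjointness argument, because the arcs $A_j$ from different generations overlap and the exponent exceeds $1$; the required upper bound for $\bigl\|\sum_j\tfrac{\lambda_j}{1-|z_j|}\chi_{A_j}\bigr\|_{L^{p/(p-q)}}$ needs a Hardy-type inequality across generations (insert a factor $2^{\epsilon n}$ and apply H\"older). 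Both repairs are standard, but as written the necessity direction does not go through for all $p$.
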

\par\medskip A simple geometric argument shows that if the measure
$\mu $ is supported in $[0,1)$ then $\Gamma_\alpha (e^{is})\cap
[0,1)=[0,s_\alpha )$, where
$$1-s_\alpha \sim (\tan\alpha )\,s,\quad\text{as $s\to 0$}.$$
In particular, this implies the following.
\begin{other}\label{th-q-car-rad} Let $\mu $ be a positive Borel measure on $\D $
supported in $ [0,1)$, $0<q<p<\infty $. Then $\mu $ is a
$q$-Carleson measure for $H^p$ if and only if
\begin{equation}\label{eq-q-car-rad}
\int_0^1\left (\int_0^{1-s}\frac{d\mu (t)}{1-t}\right
)^{\frac{p}{p-q}}\,ds\,<\,\infty .\end{equation}
\end{other}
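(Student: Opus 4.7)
The statement is presented as a direct consequence of Theorem~\ref{th-q-car-gen}, and I would follow exactly that route. Fix any $\alpha \in (0, \pi/2)$. By Theorem~\ref{th-q-car-gen}, $\mu$ is a $q$-Carleson measure for $H^p$ if and only if $\tilde\mu_\alpha \in L^{p/(p-q)}(\partial \D)$, so it suffices to show that, up to additive bounded terms, $\|\tilde\mu_\alpha\|_{L^{p/(p-q)}(\partial\D)}^{p/(p-q)}$ is comparable to the integral in \eqref{eq-q-car-rad}.

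Since $\supp\mu \subset [0,1)$, the balayage reduces to a one-sided radial integral: $\tilde\mu_\alpha(e^{is}) = \int_{\Gamma_\alpha(e^{is})\cap[0,1)} \frac{d\mu(t)}{1-t}$. By the geometric observation preceding the theorem, the intersection $\Gamma_\alpha(e^{is})\cap[0,1)$ is of the form $[0, s_\alpha(s))$ with $1 - s_\alpha(s) \sim (\tan\alpha)|s|$ as $s \to 0$, while for $|s|$ bounded away from $0$ it lies inside some fixed $[0, c_\alpha]$ with $c_\alpha < 1$. Consequently $\tilde\mu_\alpha(e^{is})$ is uniformly bounded for $|s|$ away from $0$, so the only potentially large values occur near $s = 0$.

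The main step is then a change of variables: on a one-sided neighbourhood of $0$, set $u = 1 - s_\alpha(s)$. The asymptotic $1 - s_\alpha(s) \sim (\tan\alpha)|s|$ makes this map bi-Lipschitz, hence $du \asymp \tan\alpha\, ds$, and
$$\int_{-s_0}^{s_0} \tilde\mu_\alpha(e^{is})^{p/(p-q)}\, \frac{ds}{2\pi} \asymp \int_0^{u_0} \left(\int_0^{1-u} \frac{d\mu(t)}{1-t}\right)^{p/(p-q)} du$$
for appropriate small $s_0, u_0 > 0$. The complement of $(-s_0, s_0)$ in $\partial\D$ contributes a bounded term to the left side, and since $\int_0^{1-u} (1-t)^{-1} d\mu(t)$ is bounded for $u$ away from $0$, the complement of $(0, u_0)$ contributes a bounded term to the right. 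Thus the two integrals are simultaneously finite, which combined with Theorem~\ref{th-q-car-gen} yields the desired equivalence.

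The only real obstacle is verifying cleanly that $s \mapsto 1 - s_\alpha(s)$ is bi-Lipschitz on a one-sided neighbourhood of $0$ and that the ``far from $1$'' contributions are genuinely harmless; neither step is delicate, which is why the authors present Theorem~\ref{th-q-car-rad} as an immediate corollary of Theorem~\ref{th-q-car-gen}.
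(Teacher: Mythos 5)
Your proposal is correct and follows exactly the route the paper intends: Theorem~\ref{th-q-car-gen} plus the geometric observation that $\Gamma_\alpha(e^{is})\cap[0,1)=[0,s_\alpha)$ with $1-s_\alpha\sim(\tan\alpha)\,s$, followed by the change of variables $u=1-s_\alpha(s)$ and the remark that the contributions away from $s=0$ (respectively $u=0$) are bounded. The paper states the result as an immediate corollary without writing out these details, and your fleshed-out version (where one could replace the bi-Lipschitz claim by the simpler observation that $u\mapsto\int_0^{1-u}\frac{d\mu(t)}{1-t}$ is monotone and $1-s_\alpha(s)\asymp s$) is exactly the intended argument.
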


\medskip
\section{Proofs of the main results. Case $p\le 1$.}
\begin{Pf}{\,\em{Theorem \ref{Def:p<1}.}}
\par (i)\,\,$\Rightarrow
$\,\, (ii). Suppose that $\mu $ is an $1/p$-Carleson measure. Using
\cite[Theorem\,\@9.\,\@4]{D}, we see that there exists a positive
constant $C$ such that
\begin{equation}\label{car-1/p}\int_{[0,1)}\,\vert f(t)\vert\,d\mu
(t)\,\le\, C\Vert f\Vert _{H^p},\quad\text{for all\, $f\in
H^p$}.\end{equation} Take $f\in H^p$. Using (\ref{car-1/p}) we
obtain that
$$\sum_{n=0}^\infty \Bigl(\int
  _{[0,1)}t^{n}|f(t)| \,d\mu(t)\Bigr)|z|^n\,\le \,\frac{C\,\Vert f\Vert
  _{H^p}}{1-\vert z\vert },\quad z\in \D.$$ This implies that, for
  every $z\in \D$, the integral
  $$\int_{[0,1)}\,\frac{f(t)}{1-tz}\,d\mu
  (t)\,=\,\int_{[0,1)}\,f(t)\,\left (\sum_{n=0}^\infty \,t^nz^n\right
  )\,d\mu (t)$$ converges and that
  \begin{equation}\label{Imum}
I_\mu (f)(z)\,=\,\int_{[0,1)}\,\frac{f(t)}{1-tz}\,d\mu
  (t)\,=\,\sum_{n=0}^\infty \left (\int_{[0,1)}\,t^nf(t)\,d\mu
  (t)\right )z^n,\quad z\in \D .
  \end{equation}
  Thus $I_\mu (f)$ is a well defined analytic function in $\D $.

  \par (ii)\,\,$\Rightarrow
$\,\, (i). We claim that
\begin{equation}\label{fL1}\int_{[0,1)}\vert f(t)\vert \,d\mu(t)\,<\,\infty
,\quad\text{for all $f\in H^p$.}
\end{equation}
Indeed, take $f\in H^p$. Let $F$ be the function associated to $f$
by Proposition\,\@\ref{may-Hp}. Since $F\in H^p$, we have that the
integral $\int_{[0,1)}\frac{F(t)}{1-tz}\,d\mu (t)$ converges for all
$z\in \D$. Taking $z=0$ and bearing in mind that $0\le \vert
f(t)\vert \le F(t)$ ($t\in (0,1)$), we obtain that
$$\int_{[0,1)}\vert f(t)\vert \,d\mu(t)\,\le\,
\int_{[0,1)}F(t)\,d\mu(t)\, <\infty .$$ Thus (\ref{fL1}) holds.
\par For any $\beta \in [0,1)$ and $f\in H^p$ define
$$T_\beta (f)=f\cdot \chi_{\{0\le |z|<\beta\}}.$$
By (\ref{fL1}),  $T_\beta $ is a linear operator from $H^p$ into
$L^1(d\mu )$ and by the lemma in \cite[Section\,\@3.\,\@2]{D},
\begin{eqnarray*}\Vert T_\beta (f)\Vert_ {L^1(d\mu
)}\,=\,&\int_{[0,\beta )}\vert f(t)\vert \,d\mu (t)\,\le \left
[\sup_{\vert z\vert \le \beta }\vert f(z)\vert \right ]\cdot \mu
([0,\beta ) )\le C_\beta \Vert f\Vert _{H^p},\quad f\in H^p.
\end{eqnarray*}
 Thus, for every $\beta\in [0,1)$, $T_\beta $ is
a bounded linear operator from $H^p$ into $L^1(d\mu )$. Furthermore,
(\ref{fL1}) also implies that $$\sup_{0\le \beta <1}\Vert T_\beta
(f)\Vert_{L^1(d\mu )}\,\le \,\int _{[0,1)}\vert f(t)\vert \,d\mu
(t)=C_f<\infty ,\quad\text{for all $f\in H^p$}.$$ Then, by the
principle of uniform boundedness, we deduce that $\sup_{\beta \in
[0,1)}\Vert T_\beta \Vert <\infty $ which implies that the
identity operator is bounded from $H^p$ into $L^1(d\mu )$. Using
again \cite[Theorem\,\@9.\,\@4]{D} we obtain that $\mu $ is an
$1/p$-Carleson measure.
\par\medskip Assume now (i) (and (ii)), that
is, assume that $\mu $ is $1/p$-Carleson measure. Take $f\in H^p$,
$f(z)=\sum_{k=0}^\infty a_kz^k$ ($z\in \D $). By
Proposition\,\@\ref{moments-cond} and \cite[Theorem\,\@6.\,\@4]{D}
we have that there exists $C>0$ such that
$$\vert \mu_{n,k}\vert \,=\,\vert \mu_{n+k}\vert \,\le \,
\frac{C}{(k+1)^{1/p}}\,\,\,\text{and}\,\,\, \vert a_k\vert \le\,C
(k+1)^{(1-p)/p},\,\,\,\,\text{for all $n, k$}.$$ Then it follows
that, for every $n$,
\begin{equation*}\begin{split}\sum_{k=0}^\infty \vert \mu_{n,k}\vert \vert a_k\vert
\,\le \,& C\,\sum_{k=0}^\infty \frac{\vert a_k\vert
}{(k+1)^{1/p}}\,=\,C\,\sum_{k=0}^\infty \frac{\vert a_k\vert
^p\,\vert a_k\vert^{1-p} }{(k+1)^{1/p}} \\
\le \,& C\,\sum_{k=0}^\infty \frac{\vert a_k\vert
^p\,(k+1)^{(1-p)^2/p} }{(k+1)^{1/p}} \,=\,C\,\sum_{k=0}^\infty
(k+1)^{p-2}\vert a_k\vert ^p
\end{split}\end{equation*}
 and then
by a well known result of Hardy and Littlewood
(\cite[Theorem\,\@6.\,\@2]{D}) we deduce that $$\sum_{k=0}^\infty
\vert \mu_{n,k}\vert \vert a_k\vert \,\le \,C\,\Vert
f\Vert_{H^p}^p,\quad\text{for all $n$}.$$ This implies that
$\mathcal H_\mu $ is a well defined analytic function in $\D$ and
that
$$\int_{[0,1)}\,t^nf(t)\,d\mu (t)\,=\,\sum_{k=0}^\infty \mu
_{n,k}a_k,\quad\text{for all $n$},$$ bearing in mind (\ref{Imum}),
this gives that $\mathcal H_\mu (f)=I_\mu (f).$
\end{Pf}
\par\medskip

\begin{Pf}{\,\em{Theorem \ref{bound:p<1}.}}
\par Since $\mu $ is an $1/p$-Carleson measure, there exists $C>0$
such that (\ref{car-1/p}) holds. This implies that
\begin{equation}\label{cons-car-1/p}\int_0^{2\pi }\int_{[0,1)}\left \vert
\frac{f(t)\,g(e^{i\theta })}{1-re^{i\theta }t}\right\vert\,d\mu
(t)\,d\theta\,<\,\infty ,\quad{0\le r<1,\,\,\,f\in H^p,\,\,\,g\in
H^1.}\end{equation} Using Theorem\,\@\ref{Def:p<1},
(\ref{cons-car-1/p}) and Fubini's theorem, and the Cauchy's integral
representation of $H^1$-functions \cite[Theorem\,\@3.\,\@6]{D}, we
obtain
\begin{equation}\label{Hmu-du}\begin{split}&\int_0^{2\pi }\,\mathcal H_\mu (f)
(re^{i\theta })\,\overline {g(e^{i\theta })}\,d\theta = \int_0^{2\pi
}\left (\int_{[0,1)}\frac{f(t)\,d\mu (t)}{1-re^{i\theta
}t}\right)\overline {g(e^{i\theta })}\,d\theta \\ =\,&
\,\int_{[0,1)}f(t)\int_0^{2\pi }\,\frac{\overline {g(e^{i\theta
})}}{1-re^{i\theta }t}\,d\theta \,d\mu (t)=\int_{[0,1)}f(t)\overline
{g(rt)}d\mu (t),\,\,{0\le r<1,\,f\in H^p,\,g\in H^1.}
\end{split}\end{equation}
\par\medskip \par (i) Take $q\in (0,1)$.
Bearing in mind (\ref{Hmu-du}) and (\ref{car-1/p}) we deduce that
\begin{equation}\label{boundHmu}\left\vert
\int_0^{2\pi }\,\mathcal H_\mu (f)(re^{i\theta })\,\overline
{g(e^{i\theta })}\,d\theta \right\vert \,\le\,C\Vert
f\Vert_{H^p}\Vert g\Vert _{H^\infty },\,\,{0\le r<1,\,f\in
H^p,\,g\in H^\infty .}\end{equation} Now we recall
\cite[Theorem\,\@10]{DRS} that $B_q$ can be identified with the dual
of a certain subspace $X$ of $H^\infty $ under the pairing
$$<f, g>\,=\,\lim_{r\to 1}\frac{1}{2\pi }\int_0^{2\pi }\,f(re^{i\theta
})\,\overline{g(e^{i\theta })}\,d\theta ,\quad f\in B_q,\quad f\in
X.$$  This together with   (\ref{boundHmu}) gives that $\mathcal
H_\mu $ is a bounded operator from $H^p$ into $B_q$.

\par\medskip
(ii) We shall use Fefferman's duality theorem \cite{F, FS}, which
says that $(H^1)^\star\cong BMOA$ and $(VMOA)^\star \cong H^1$,
under the Cauchy pairing
\begin{equation}\label{eq:pai}
\langle f,g\rangle =\lim_{r\to 1^{-}}\frac{1}{2\pi}
\int_0^{2\pi}f(re^{i\theta})\ol{g(e^{i\theta})}\,d\theta,\quad f\in
H^1,\quad g\in BMOA\,\,(\text{resp.}\,\,VMOA),
\end{equation}
We mention \cite{Ba, Gar, G:BMOA},  as general references for the
spaces $BMOA$ and $VMOA$. In particular, Fefferman's duality theorem
can be found in \cite[Section\,\@7]{G:BMOA}.
\par Using the duality theorem and (\ref{cons-car-1/p}) it follows
 that $\mathcal H_\mu $ is a bounded operator from $H^p$ into
$H^1$ if and only there exists a positive constant $C$ such that
\begin{equation}\label{bound-p1}\left\vert \int_{[0,1)}f(t)\overline
{g(rt)}\,d\mu (t)\right\vert \,\le C\Vert f\Vert_{H^p}\Vert g\Vert
_{BMOA},\quad 0<r<1,\,\,\,f\in H^p,\,\,\,g\in VMOA.\end{equation}
\par Suppose that $\mathcal H_\mu $ is a bounded operator from $H^p$ to $H^1$. For $0<a,b<1$, let the functions $g_a$
and $f_b$ be defined by
\begin{equation}\begin{split}\label{eq:famfunctii}
g_a(z) &=\log\frac{2}{1-az}, \quad \quad f_b(z)
=\left(\frac{1-b^2}{(1-bz)^2}\right)^{1/p},\quad z\in \mathbb D.
\end{split}\end{equation}
\par A calculation shows that $\{g_a\}\subset VMOA$, $\{f_b\}\subset H^p
$, and
\begin{equation}\begin{split}\label{eq:hu1ii}
\sup_{a\in[0,1)}||g_a||_{BMOA}<\infty\quad\text{and}\quad
\sup_{b\in[0,1)}||f_b||_{H^p}<\infty.
\end{split}\end{equation}
\par Next,  taking $a=b\in [0,1)$ and $r\in [a,1)$, we obtain
\begin{equation*}\begin{split}
\left| \int_0^1f_a(t)\overline{g_a(rt)}\,d\mu(t)
 \right| &
\ge
\int_a^1\left(\frac{1-a^2}{(1-at)^2}\right)^{1/p}\log\frac{2}{1-rat}\,d\mu(t),
\\ & \ge C \frac{\log\frac{2}{1-a^2}}{(1-a^2)^{1/p}}\mu\left([a,1)\right),
\end{split}\end{equation*}
which, bearing in mind  \eqref{bound-p1} and \eqref{eq:hu1ii},
implies that $\mu$ is an $1$-logarithmic $\frac{1}{p}$-Carleson
measure.
\medskip\par
Reciprocally, suppose that $\mu$ is an $1$-logarithmic
$\frac{1}{p}$-Carleson measure. Let us see that $\mathcal H_\mu $ is
a bounded operator from $H^p$ to $H^1$. Using
 (\ref{bound-p1}),  it is enough to prove there exists $C>0$ such
 that
\begin{equation*}
\int_0^1|f(t)||g(rt)|\,d\mu(t)
 \le C||f||_{H^p}||g||_{BMOA},\quad\text{for all $r\in (0,1)$, $f\in H^p$, and $g\in VMOA$.}
\end{equation*}
By \cite[Theorem 9.4]{D}, this is equivalent to saying that, for
every $r\in (0,1)$ and every $g\in VMOA$, the measure $\vert
g(rz)\vert \,d\mu (z)$ is an $1/p$-Carleson measure with constant
bounded by $C\Vert g\Vert _{BMOA}$. Using Lemma \ref{le:zh} this can
be written as
\begin{equation}\label{eq:hu2ii}
\sup_{a\in
\D}\int_{\D}\left(\frac{1-|a|^2}{|1-\bar{a}z|^2}\right)^{1/p}|g(rz)|\,d\mu(z)\le
C||g||_{BMOA},\quad\text{$0<r<1$,\,\, $g\in VMOA$}.
\end{equation}
\par So take $r\in(0,1)$, $a\in\D$ and $g\in VMOA$. We have
\begin{equation*}\begin{split}
&\int_{\D}\left(\frac{1-|a|^2}{|1-\bar{a}z|^2}\right)^{1/p}|g(rz)|\,d\mu(z)
\\ & \le |g(ra)|\int_{\D}\left(\frac{1-|a|^2}{|1-\bar{a}z|^2}\right)^{1/p}\,d\mu(z)
+\int_{\D}\left(\frac{1-|a|^2}{|1-\bar{a}z|^2}\right)^{1/p}|g(rz)-g(ra)|\,d\mu(z)
\\
&=I_ 1(r,a)+I_ 2(r,a).
\end{split}\end{equation*}
Bearing in mind that any function $g$ in the Bloch space
$\mathcal{B}$ (see \cite{ACP}) satisfies the growth condition
\begin{equation}\label{blochgrowth}
|g(z)|\leq 2\|g\|_{\mathcal{B}}\,\log \frac{2}{1-|z|},\quad\text{for
all $z\in\D$},
\end{equation}
and $BMOA\subset \mathcal{B}$ \cite[Theorem\,\@5.\,\@1]{G:BMOA}), by
Lemma \ref{le:zh} we have  that
\begin{equation}\begin{split}\label{eq:hu4ii}
I_ 1(r,a)&\leq C||g||_{BMOA}\log
\frac{2}{1-|a|}\,\int_{\D}\left(\frac{1-|a|^2}{|1-\bar{a}z|^2}\right)^{1/p}\,d\mu(z)
\\ & \le CK_{1,\frac{1}{p}}(\mu)||g||_{BMOA}<\infty .
\end{split}\end{equation}
\par
Now, since $\mu $ is an $1/p$-Carleson measure, Lemma \ref{le:zh}
yields

\begin{equation*}\begin{split}
I_ 2(r,a)  \leq &\,CK_{\frac{1}{p}}(\mu) \left\Vert \left
(\frac{1-\vert a\vert ^2}{(1-\overline a\,z)^2}\right )^{1/p}\left
[g(rz)-g(ra)\right ]\right\Vert _{H^p}
\\ =& \, CK_{\frac{1}{p}}(\mu) \left (
\int_{0}^{2\pi}\frac{1-|a|^2}{|1-\bar{a}e^{i\theta}|^2}|g_r(e^{i\theta})-g_r(a)|^p\,d\theta
\right )^{1/p}\\ \le & \,
CK_{\frac{1}{p}}(\mu)\int_{0}^{2\pi}\frac{1-|a|^2}{|1-\bar{a}e^{i\theta}|^2}|g_r(e^{i\theta})-g_r(a)|\,d\theta
,\end{split}\end{equation*} where, $g_r(z)=g(rz)$ ($z\in\D $). Now,
using the conformal invariance of $BMOA$
(\cite[Theorem\,\@3.\,\@1]{G:BMOA})) and the fact that $BMOA$ is
closed under subordination \cite[Theorem\,\@10.\,\@3]{G:BMOA}, we
obtain that
$$\int_{0}^{2\pi}\frac{1-|a|^2}{|1-\bar{a}e^{i\theta}|^2}|g_r(e^{i\theta})-g_r(a)|\,d\theta
\le C\Vert g\Vert _{BMOA}$$ and then it follows that $I_ 2(r,a)\le
CK_{\frac{1}{p}}(\mu)\Vert g\Vert _{BMOA}$. This and
(\ref{eq:hu4ii}) give (\ref{eq:hu2ii}), finishing the proof of
part\,\@(ii).

\par\medskip \par (iii)  Using
(\ref{Hmu-du}), the duality theorem for $H^q$
\cite[Section\,\@7.\,\@2]{D} and arguing as in the proof of
part\,\@(ii), we can assert that $\mathcal H_\mu $ is a bounded
operator from $H^p$ to $H^q$ if and only if there exists a positive
constant $C$ such that
\begin{equation}\label{Hmuboundedpq>1++}
\left \vert \int_{[0,1)}\,f(t)\,\overline {g(t)}\,d\mu (t)\right
\vert \,\le \,C\,\Vert f\Vert _{H^p}\,\Vert g\Vert
_{H^{q^\prime}},\quad f\in H^p,\quad g\in H^{q^\prime
}.\end{equation} Now, by Proposition\,\@\ref{may-Hp}, it follows
that (\ref{Hmuboundedpq>1++}) is equivalent to
\begin{equation}\label{Hmuboundedpq>1--}
\int_{[0,1)}\,\vert f(t)\vert \,\vert g(t)\vert \,d\mu (t)\,\le
\,C\,\Vert f\Vert _{H^p}\,\Vert g\Vert _{H^{q^\prime}},\quad f\in
H^p,\quad g\in H^{q^\prime },\end{equation} and, by
Lemma\,\@\ref{le:zh}, this is the same as saying the following:
\par For every $g\in H^{q^\prime }$, the measure $\mu_g$ supported
on $[0,1)$ and defined by $d\mu_g(z)=\vert g(z)\vert \,d\mu (z)$ is
a $1/p$-Carleson measure with $K_{\frac{1}{p}}(\mu _g)\le C\,\Vert
g\Vert _{H^{q^\prime }}$, that is,
\begin{equation}\label{Hmuboundedpq>1}\sup _{a\in \D
}\,\int_{[0,1)}\,\left (\frac{1-\vert a\vert ^2}{\vert 1-\overline
at\vert^2}\right )^{1/p}\,\vert g(t)\vert \,d\mu (t)\,\le \, C\Vert
g\Vert _{H^{q^\prime}},\quad g\in H^{q^\prime }.\end{equation}

\par Suppose that
$\mathcal H_\mu $ is a bounded operator from $H^p$ to $H^q$. Then
(\ref{Hmuboundedpq>1}) holds. For $a\in \D$, take
$$g_a(z)=\left (\frac{1-\vert a\vert ^2}{(1-\overline az)^2}\right
)^{1/q^\prime },\quad z\in \D.$$ Since
 $\sup_{a\in \D} \Vert g_a\Vert_{H^{q^\prime
}}<\infty $, (\ref{Hmuboundedpq>1}) implies that $$\sup _{a\in
\D }\,\int_{[0,1)}\,\left (\frac{1-\vert a\vert ^2}{\vert
1-\overline at\vert^2}\right )^{\frac{1}{p}+\frac{1}{q^\prime
}}\,d\mu (t)<\infty ,$$  that is,
$\mu $ is a $\frac{1}{p}+\frac{1}{q^\prime }$-Carleson measure, by Lemma\,\@\ref{le:zh}.

\medskip\par Suppose now that $\mu $ is an $\frac{1}{p}+\frac{1}{q^\prime }$-Carleson
measure. Set $s=1+\frac{p}{q^\prime }$. The  conjugate exponent
of $s$ is $s^\prime =1+\frac{q^\prime }{p}$ and
$\frac{1}{p}+\frac{1}{q^\prime }\,=\,\frac{s}{p}\,=\,\frac{s^\prime }{q^\prime }.$ Then, by
\cite[Theorem~9.\@4]{D},  $H^p$ is continuously embedded
in $L^s(d\mu )$ and $H^{q^\prime }$ is continuously embedded in
$L^{s^\prime }(d\mu )$, that is, \begin{equation}\label{s}\left
(\int_{[0,1)}\vert f(t)\vert ^s\,d\mu (s)\right )^{1/s}\,\le
\,C\,\Vert f\Vert _{H^p},\quad f\in H^p,\end{equation} and
\begin{equation}\label{sprime}\left (\int_{[0,1)}\vert g(t)\vert
^{s^\prime }\,d\mu (s)\right )^{1/{s^\prime }}\,\le \,C\,\Vert
g\Vert _{H^{q^\prime }},\quad g\in H^{q^\prime }.\end{equation}
Using H\"{o}lder's inequality with exponents $s$ and $s^\prime $,
(\ref{s}) and (\ref{sprime}), we obtain
\begin{equation*}\begin{split} \int_{[0,1)}\vert f(t)\vert \,\vert g(t)\vert\,d\mu
(t)\,\le & \left (\int_{[0,1)}\vert f(t)\vert ^s\,d\mu (s)\right
)^{1/s}\,\left (\int_{[0,1)}\vert g(t)\vert ^{s^\prime }\,d\mu
(s)\right )^{1/{s^\prime }}\\ \le & C\,\Vert f\Vert _{H^p}\,\Vert
g\Vert _{H^{q^\prime }},\quad f\in H^p,\,\, g\in H^{q^\prime
}.\end{split}\end{equation*} Hence, (\ref{Hmuboundedpq>1--}) holds
and then it follows that $\mathcal H_\mu $ is a bounded operator
from $H^p$ to $H^q$.
\end{Pf}
\medskip

\section{Proofs of the main results. Case $p>1$.}
\begin{Pf}{\,\em{Theorem \ref{Def:p>1}\,}}
(i). Since $\mu $ is an $1$-Carleson measure for $H^p$,
(\ref{car-1/p}) holds for a certain $C>0$.
 Then the argument used in the proof of the
implication (i)\,$\Rightarrow$\, (ii) in Theorem\,\@\ref{Def:p<1}
gives that, for every $f\in H^p$, $I_\mu (f)$ is a well defined
analytic function in $\D $ and
\begin{equation}\label{Imu-power}
I_\mu (f)(z)\,=\,\sum_{n=0}^\infty \left
(\int_{[0,1)}\,t^nf(t)\,d\mu
  (t)\right )z^n,\quad z\in \D .\end{equation}
  \par The reverse implication can be proved just as (ii)\,$\Rightarrow$\, (i) in
  Theorem\,\@\ref{Def:p<1}.
  \par The fact that $\mu $ being an $1$-Carleson measure for $H^p$
  is equivalent to (\ref{int-car-1p}) follows from
  Theorem\,\@\ref{th-q-car-rad}.
\par
(ii). Take $f\in
H^p$, $f(z)=\sum_{k=0}^\infty a_kz^k$ ($z\in \D $). Set
$$S_n(f)(z)=\sum_{k=0}^n a_kz^k,\quad R_n(f)(z)=\sum_{k=n+1}^\infty
a_kz^k,\quad z\in \D ,\quad n= 0, 1, 2, \dots .$$ Whenever $0\le
N<M$ and $n\ge 0$, we  have
\begin{equation*}\label{cauchy}\begin{split}
\left \vert \sum_{k=N+1}^M\,\mu_{n,k}a_k\right\vert \,=\,& \left
\vert \int_{[0,1)}t^n\,\left (\sum_{k=N+1}^M\,a_k\,t^k\right )\,d\mu
(t)\right\vert \\ =\,& \left \vert \int_{[0,1)}t^n\,\left
[S_M(f)(t)-S_N(f)(t)\right ]\,d\mu (t)\right\vert \\ \le \,&
\int_{[0,1)}\left \vert S_M(f)(t)-S_N(f)(t)\right \vert \,d\mu (t).
\end{split}\end{equation*}
Using this, the fact that $\mu $ is an $1$-Carleson measure for
$H^p$, and the Riesz projection theorem, we deduce that
$$\sum_{k=N+1}^M\,\mu_{n,k}a_k\to 0,\quad \text{as $N, M\to \infty
$}$$ for all $n$. This gives that the series $\sum_{k=0}^\infty
\,\mu_{n,k}a_k$ converges for all $n$.
\par For $n, N\ge 0$, we have
\begin{equation*}\begin{split}
\left\vert \int_{[0,1)}\,t^n\,f(t)\,d\mu
(t)\,-\,\sum_{k=0}^N\,\mu_{n,k}\,a_k\right\vert \,= & \, \left\vert
\int_{[0,1)}\,t^n\,f(t)\,d\mu (t)\,-\,\int_{[0,1)}\,t^n\,\left
(\sum_{k=0}^N\,a_k\,t^t\right )\,d\mu (t)\right\vert \\ = &
 \left\vert
\int_{[0,1)}\, t^n\,R_{N}(f)(t)\,d\mu (t)\right \vert \\ \le &
C\Vert R_N(f)\Vert _{H^p}.
\end{split}\end{equation*}
Since $1<p<\infty $, $\Vert R_N(f)\Vert _{H^p}\to 0$, as $N\to\infty
$, and then it follows that
$$\sum_{k=0}^\infty \,\mu_{n,k}a_k\,=\,\int_{[0,1)}\,t^n\,f(t)\,d\mu
(t),\quad \text{for all $n$},$$ which together with (\ref{car-1/p})
implies that $\mathcal H_\mu (f)$ is a well defined analytic
function in $\D$
 and, by (\ref{Imu-power}),
$\mathcal H_\mu (f)=I_\mu (f)$.
\end{Pf}
\par\bigskip Let us turn to prove Theorem\,\@\ref{bound:p>1}. In
view of Theorem\,\@\ref{Def:p>1}, $\mathcal H_\mu$ coincides with
$I_\mu $ on $H^p$. This fact will be used repeatedly in the
following.
\par Recall that (\ref{int-car-1p}) implies that $\mu $ is
an $1$-Carleson measure for $H^p$, that is, we have
$$\int_{[0,1)}\,\vert f(t)\vert\,d\mu (t)\,\le \,C\Vert f\Vert
_{H^p},\quad f\in H^p.$$ Then arguing as in the proof of
Theorem\,\@\ref{bound:p<1}, we obtain

\begin{equation}\label{Hmu-du-p>1}\begin{split}&\int_0^{2\pi }\,\h_\mu (f)
(re^{i\theta })\,\overline {g(e^{i\theta })}\,d\theta = \int_0^{2\pi
}\left (\int_{[0,1)}\frac{f(t)\,d\mu (t)}{1-re^{i\theta
}t}\right)\overline {g(e^{i\theta })}\,d\theta \\ =\,&
\,\int_{[0,1)}f(t)\int_0^{2\pi }\,\frac{\overline {g(e^{i\theta
})}}{1-re^{i\theta }t}\,d\theta \,d\mu (t)=\int_{[0,1)}f(t)\overline
{g(rt)}d\mu (t),\,\,{0\le r<1,\,f\in H^p,\,g\in H^1.}
\end{split}\end{equation}

\par\medskip Once (\ref{Hmu-du-p>1}) is established, (i) can be proved with the argument used in the proof of part\,\@(iii) of
Theorem\,\@\ref{bound:p<1}.
\par\medskip
 Part\,\@(ii) of Theorem\,\@
\ref{bound:p>1} is a byproduct of the following result.

\begin{proposition}\label{acotacionhpmenorq}
Asumme that $1<q<p<\infty$ and  let $\mu$ be a positive Borel
measure on $[0,1)$ satisfying (\ref{int-car-1p}). Then, the
following conditions are equivalent:\begin{itemize} \item[(a)]\,
$\h_\mu $ is a bounded operator from  $H^p$ to $H^q$. \item[(b)]\,
\, $\h_\mu $ is a bounded operator from $H^{\frac{2pq'}{p+q'}}$ to
$H^{\left(\frac{2pq'}{p+q'}\right)'}$.
\item[(c)]\,$H^{\frac{2pq'}{p+q'}}$ is continuously contained in $L^{2}(\mu)$.
\item[(d)]\, The function defined by
$s\mapsto\int_{0}^{1-s}\frac{d\mu(t)}{1-t}$ ($s\in [0,1)$) belongs
to $L^{\left(\frac{pq'}{p+q'}\right)'}([0,1))$.
\end{itemize}
\end{proposition}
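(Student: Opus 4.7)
\par\noindent\textbf{Proof plan.} The strategy is to take (c) as the hub and prove (a)$\Leftrightarrow$(c), (b)$\Leftrightarrow$(c), and (c)$\Leftrightarrow$(d) separately. Set $r:=2pq'/(p+q')$; since $1/p+1/q'=2/r$ and $q<p$ forces $q'>p'$ (so that $1/p+1/q'<1$), we have $r>2$. By Theorem~\ref{Def:p>1} we may freely identify $\h_\mu$ with $I_\mu$ on $H^p$ and on $H^r$ throughout. Combining the Cauchy pairing identity \eqref{Hmu-du-p>1}, Riesz $L^{q'}$--$L^q$ duality, and Proposition~\ref{may-Hp} (which permits us to pass from signed values to moduli on the radius $[0,1)$), exactly as in the proof of part~(iii) of Theorem~\ref{bound:p<1}, the first step is to recast (a) and (b) as the bilinear estimates $\int_{[0,1)}|f||g|\,d\mu\le C\|f\|_{H^p}\|g\|_{H^{q'}}$ for $(f,g)\in H^p\times H^{q'}$, and $\int_{[0,1)}|f||g|\,d\mu\le C\|f\|_{H^r}\|g\|_{H^r}$ for $(f,g)\in H^r\times H^r$, respectively.

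Once this reduction is in hand, the equivalence (b)$\Leftrightarrow$(c) is routine: Cauchy--Schwarz gives (c)$\Rightarrow$(b), and the choice $f=g$ in the bilinear estimate for (b) gives (b)$\Rightarrow$(c). For (a)$\Rightarrow$(c), given $F\in H^r$ I would first invoke Proposition~\ref{may-Hp} to replace $F$ by a zero-free dominant on $(0,1)$ of the same $H^r$-norm, and then exploit the identity $r/p+r/q'=2$ to factor $F^2=F^{r/p}\cdot F^{r/q'}$, with $F^{r/p}\in H^p$ of norm $\|F\|_{H^r}^{r/p}$ and $F^{r/q'}\in H^{q'}$ of norm $\|F\|_{H^r}^{r/q'}$; feeding this into the bilinear form of (a) yields $\|F\|_{L^2(\mu)}^2\le C\|F\|_{H^r}^2$. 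Conversely, for (c)$\Rightarrow$(a), given $f\in H^p$ and $g\in H^{q'}$, I would again use Proposition~\ref{may-Hp} to assume $f,g$ zero-free, form the analytic square root $h:=(fg)^{1/2}\in H^r$, and observe that H\"older on the boundary (using $1/p+1/q'=2/r$) gives $\|h\|_{H^r}^2=\|fg\|_{H^{r/2}}\le\|f\|_{H^p}\|g\|_{H^{q'}}$; condition (c) then delivers $\int_{[0,1)}|fg|\,d\mu=\|h\|_{L^2(\mu)}^2\le C\|f\|_{H^p}\|g\|_{H^{q'}}$.

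Finally, (c)$\Leftrightarrow$(d) is a direct application of Theorem~\ref{th-q-car-rad} to the pair $(r,2)$: because $r>2$, condition (c) says precisely that $\mu$ is a $2$-Carleson measure for $H^r$, and the exponent $r/(r-2)$ produced by Theorem~\ref{th-q-car-rad} coincides with $(pq'/(p+q'))'$ after a short arithmetic check. The main obstacle I anticipate is the step (a)$\Leftrightarrow$(c), where one must move between the three distinct Hardy spaces $H^p$, $H^{q'}$ and $H^r$ by raising functions to fractional powers; since those powers are only analytic on zero-free functions, Proposition~\ref{may-Hp} (rather than the ordinary Riesz factorization of an $H^{r/2}$ function into an $H^p$ and an $H^{q'}$ factor) is essential, precisely because the measure lives on the real segment $[0,1)$ and the inequality is driven by the radial values $f(t),g(t)$.
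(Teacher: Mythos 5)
Your proposal is correct, and most of it coincides with the paper's argument: the duality reduction of (a) and (b) to bilinear estimates over $[0,1)$, the passage to moduli via Proposition~\ref{may-Hp}, the factorization $F^2=F^{r/p}\cdot F^{r/q'}$ with $r/p+r/q'=2$ for (a)$\Rightarrow$(c) (the paper writes the same exponents as $2q'/(p+q')$ and $2p/(p+q')$ and obtains the zero-free representative from the inner--outer factorization rather than from Proposition~\ref{may-Hp}, an immaterial difference), the choice $f=g$ for (b)$\Rightarrow$(c), and Theorem~\ref{th-q-car-rad} applied to the pair $(r,2)$ for (c)$\Leftrightarrow$(d). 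The one genuinely different step is the return to (a): the paper closes a cycle with (d)$\Rightarrow$(a), invoking Theorem~\ref{th-q-car-rad} twice more to obtain the two embeddings $H^p\subset L^{(p+q')/q'}(d\mu)$ and $H^{q'}\subset L^{(p+q')/p}(d\mu)$ and then applying H\"older in $L(\mu)$, whereas you prove (c)$\Rightarrow$(a) directly from the single $L^2(\mu)$ embedding by forming the analytic square root $h=(fg)^{1/2}\in H^r$ of a zero-free pair and using H\"older on the circle to get $\|h\|_{H^r}^2\le\|f\|_{H^p}\|g\|_{H^{q'}}$. Your variant stays entirely at the level of condition (c) and avoids matching a second and third Carleson exponent, at the cost of one more appeal to Proposition~\ref{may-Hp} and to fractional powers; the paper's variant avoids fractional powers in this direction altogether. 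Both are sound, and the exponent identities you rely on ($2/r=1/p+1/q'$, $r>2$, and $r/(r-2)=(pq'/(p+q'))'$) all check out.
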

\begin{proof}
\par (a)\, $\Rightarrow $ \,(b).
Using duality as above, we see that (a) is equivalent to
\begin{equation}\label{Imub}
\left \vert \int_{[0,1)}\,f(t)\,\overline {g(t)}\,d\mu (t)\right
\vert \,\lesssim\,\Vert f\Vert _{H^p}\,\Vert g\Vert
_{H^{q^\prime}},\quad f\in H^p,\quad g\in H^{q^\prime
}.\end{equation} Take $f\in H^p$ and $g\in H^{q'}$, and let $F\in
H^p$ and $G\in H^{q'}$ be the functions associated to $f$ and $g$ by
Proposition\,\@\ref{may-Hp}, respectively. Using (\ref{Imub}) and
H\"{o}lder's inequality, we obtain
\begin{equation}\label{eqj1}\begin{split}
 \int_0^1|f(t)||g(t)|\,d\mu(t) & \lesssim \int_{0}^1 F(t)\,{G(t)}\,d\mu(t)
\,= \left|  \int_{0}^1 F(t)\,\overline {G(t)}\,d\mu(t) \right|
\\ & \lesssim||F||_{H^p}||G||_{H^{q'}}\lesssim ||f||_{H^p}||g||_{H^{q'}}.
\end{split}\end{equation}
Take now $\phi\in H^{\frac{2pq'}{p+q'}}$. By the outer-inner
factorization \cite[Chapter $2$]{D}, $\phi=\Phi\cdot I$ where $I$ is
an inner function and $\Phi\in H^{\frac{2pq'}{p+q'}}$ is free from
zeros and $||\Phi||_{ H^{\frac{2pq'}{p+q'}}}=||\phi||_{
H^{\frac{2pq'}{p+q'}}}$. Now let us consider the analytic functions
$f=\Phi^{\frac{2q'}{p+q'}}$ and $g=\Phi^{\frac{2p}{p+q'}}$. We have
$$f=\Phi^{\frac{2q'}{p+q'}}\in H^p,\quad\text{with
$||f||_{H^p}=||\Phi||^{\frac{2q'}{p+q'}}_{H^{\frac{2pq'}{p+q'}}}$}$$
and
$$g=\Phi^{\frac{2p}{p+q'}}\in H^{q'},\quad\text{with $||g||_{H^{q'}}=||\Phi||^{\frac{2p}{p+q'}}_{H^{\frac{2pq'}{p+q'}}}$}.$$
Bearing in mind (\ref{eqj1}), it follows that
\begin{equation*}\begin{split}
\int_0^1|\phi(t)|^2\,d\mu(t)& \le  \int_0^1|\Phi(t)|^2\,d\mu(t)
 \\ & = \int_0^1|f(t)||g(t)|\,d\mu(t)
\\ & \lesssim ||f||_{H^p}||g||_{H^{q'}}=||\Phi||^2_{ H^{\frac{2pq'}{p+q'}}}=||\phi||^2_{ H^{\frac{2pq'}{p+q'}}},
\end{split}\end{equation*}
which gives (b).
\par (b)\, $\Rightarrow $ \,(c).
Since $p>q>1$, $\frac{pq'}{p+q'}>1$, by duality, as above, (b) is
equivalent to
$$\left|\int_0^1f(t)\overline{g(t)}\,d\mu(t)\right|\,\le C\,||f||_{H^{\frac{2pq'}{p+q'}}}||g||_{H^{\frac{2pq'}{p+q'}}},\quad f,g\in H^{\frac{2pq'}{p+q'}}.$$
Taking $f=g$ we obtain
$$\int_0^1\,\vert f(t)\vert ^2\,d\mu(t)\,\le
C\,||f||_{H^{\frac{2pq'}{p+q'}}}^2.$$ This is (c).
\medskip\par Theorem\,\@\ref{th-q-car-rad} gives that (c) and (d) are
equivalent.
\medskip
\par (d)\, $\Rightarrow $ \,(a).  Using again
Theorem\,\@\ref{th-q-car-rad} we have that $H^p$ is continuously
contained in  $L^\frac{p+q'}{q'}(d\mu)$ and $H^{q'}$ is continuously
contained in $L^\frac{p+q'}{p}(d\mu)$, which together with
H\"{o}lder's inequality gives
\begin{equation*}\begin{split}
\int_0^1|f(t)||g(t)|\,d\mu(t)\le &
\left(\int_0^1|f(t)|^{\frac{p+q'}{q'}}\,d\mu(t)\right)^{\frac{q'}{p+q'}}
\left(\int_0^1|g(t)|^{\frac{p+q'}{p}}\,d\mu(t)\right)^{\frac{p}{p+q'}}
\\ & \le C ||f||_{H^p}||g||_{H^{q'}},\quad f\in H^p,\quad g\in H^q,
\end{split}\end{equation*}
and this is equivalent to (a).
\end{proof}

\begin{Pf}{\,\em{Theorem \ref{bound:p>1}\,(iii).}} Just as in the
proof of Theorem\,\@\ref{bound:p<1}\,\@(ii),  $\h_\mu $ is a bounded
operator from $H^p$ into $H^1$ if and only there exists a positive
constant $C$ such that
\begin{equation}\label{bound-p1-p>1}\left\vert \int_{[0,1)}f(t)\overline
{g(rt)}\,d\mu (t)\right\vert \,\le C\Vert f\Vert_{H^p}\Vert g\Vert
_{BMOA},\quad 0<r<1,\,\,\,f\in H^p,\,\,\,g\in VMOA.\end{equation}
Let $\nu $ be the measure on $[0,1)$ defined by $d\nu
(t)=\log\frac{1}{1-t}\,d\mu (t)$, by
Theorem\,\@\ref{th-q-car-rad} it follows that  the function
\,\,$s\mapsto \int_0^{1-s}\,\frac{\log\frac{1}{1-t}d\mu
(t)}{1-t}$\,\, $(s\in [0,1))$\, belongs to $L^{p^\prime }([0,1))$ if
and only if the measure $\nu $ is an $1$-Carleson measure for $H^p$.
\par Consequently, we have to prove that
$$\text{(\ref{bound-p1-p>1})\,\,\,$\Leftrightarrow $\,\,\,$\nu $ is an $1$-Carleson measure for
$H^p$.}$$
\par Suppose that (\ref{bound-p1-p>1}) holds. For $0<\rho <1$, let
$g_\rho $ be the function defined by $g_\rho (z)=\log\frac{1}{1-\rho
z}$ ($z\in \D $), then
\begin{equation*}\text{$g_\rho \in VMOA$, for all $\rho \in
(0,1)$,\,\,\,\, and \,\, $\sup_{0<\rho <1}\Vert g_\rho \Vert
_{BMOA}=A<\infty $ }.\end{equation*} On the other hand, if $f\in
H^p$, $0<r<1$, and
 $F$ is the function associated to $f$ by
Proposition\,\@\ref{may-Hp}, it follows that
\begin{equation*}\begin{split}
\int_{[0,1)}\vert f(t)\vert \,\log\frac{1}{1-\rho rt}\,d\mu (t)\,\le
\,\int_{[0,1)}\,F(t)\,\overline{g_\rho (rt)}\,d\mu (t)\,\le
C\,A\,\Vert F\Vert _{H^p}\,= C\,A\,\Vert f\Vert
_{H^p},\end{split}\end{equation*} for every $\rho\in (0,1)$. Letting
$r$ and  $\rho $ tend to $1$, we obtain
\begin{equation*}\begin{split}
\int_{[0,1)}\vert f(t)\vert \,\log\frac{1}{1-t}\,d\mu (t)\,\le
C\,A\,\Vert f\Vert _{H^p}.\end{split}\end{equation*} Thus $\nu $ is
an $1$-Carleson measure for $H^p$.

\par Conversely, assume that $\nu $ is
an $1$-Carleson measure for $H^p$. Take $r\in (0,1),\,f\in H^p,$\,
and\,$\,g\in VMOA$.
 Using (\ref{blochgrowth}), we obtain $$
\left\vert \int_{[0,1)}f(t)\overline {g(rt)}\,d\mu (t)\right\vert
\,\le \,C\,\Vert g\Vert_{BMOA}\int_{[0,1)}\vert f(t)\vert
\log\frac{2}{1-t}\,d\mu (t)\,\le\,C\,\Vert g\Vert_{BMOA}\,\Vert
f\Vert_{H^p}.$$
\end{Pf}
\par\medskip
\begin{Pf}{\,\em{Theorem \ref{bound:p>1}\,(iv).}}
Assume that the function defined by
$\,s\mapsto\,\int_0^{1-s}\frac{d\mu (t)}{1-t}$ ($s\in [0,1)$)
belongs to $L^{p^\prime }([0,1)$. By Theorem\,\@\ref{th-q-car-rad},
this implies that $H^p$ is continuously contained in $L^1(d\mu )$.
From now on, the proof is analogous to
 the proof
of Theorem\,\@\ref{bound:p<1}\,\@(i).
\end{Pf}\par\bigskip

\section{Compactness.}\label{sect-compact}
\par The next theorem gathers our main results concerning the study of the compactness of  $\h_\mu$ on
Hardy spaces.
\begin{theorem}\label{compactness} Let $\mu $ be a positive Borel
measure on $[0,1)$. \begin{itemize}
\item[(i)] If \, $0<p\le 1$\, and  $\mu $ is a $1/p$-Carleson measure, then  $\h_\mu $ is a compact
operator from $H^p$ to $H^1$ if and only if $\mu $ is a vanishing
$1$-logarithmic $1/p$-Carleson measure.
\item[(ii)] If\, $0<p\le 1< q$ and and  $\mu $ is a $1/p$-Carleson measure, then $\h_\mu $ is a compact
operator from $H^p$ to $H^q$ if and only if $\mu $ is a vanishing
$\frac{1}{p}+\frac{1}{q^\prime }$-Carleson measure.
\item[(iii)] If\, $1<p< q$  and  $\mu $ satisfies (\ref{int-car-1p}), then $\h_\mu $ is a compact
operator from $H^p$ to $H^q$ if and only if $\mu $ is a vanishing
$\frac{1}{p}+\frac{1}{q^\prime }$-Carleson measure.
\item[(iv)] If $1<p<\infty $,  $\mu $ satisfies (\ref{int-car-1p}) and $1\le q<p$, then $\mathcal H_\mu $ is a compact
operator from $H^p$ to $H^q$ if and only if it is a bounded operator
from $H^p$ to $H^q$.
\end{itemize}\end{theorem}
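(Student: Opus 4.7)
\emph{Plan.} All four parts admit a common truncation scheme. For $r\in(0,1)$ set $\mu_r:=\mu|_{[0,r]}$ and write $\mathcal H_{\mu_r}(f)(z)=\int_0^r f(t)/(1-tz)\,d\mu(t)$. Each $\mathcal H_{\mu_r}$ is compact as an operator from $H^p$ into $H^q$: the map $t\mapsto 1/(1-tz)$ is continuous into $H^q$ on $[0,r]$, so $\mathcal H_{\mu_r}$ is a uniform limit of Riemann sums $f\mapsto\sum_j c_j f(t_j)/(1-t_j z)$, each of which is a finite-rank operator from $H^p$ into $H^q$. Compactness of $\mathcal H_\mu$ will then follow from $\|\mathcal H_\mu-\mathcal H_{\mu_r}\|_{H^p\to H^q}\to 0$ as $r\to 1^-$.

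For parts (i)--(iii), the equivalence reduces to showing that the vanishing Carleson hypothesis matches exactly this operator-norm decay. Sufficiency follows by applying the \emph{quantitative} sufficiency halves of Theorems~\ref{bound:p<1} and \ref{bound:p>1} to the tail measure $\mu|_{(r,1)}$: the vanishing version of Lemma~\ref{le:zh} guarantees that the relevant ($1$-logarithmic) Carleson constant of $\mu|_{(r,1)}$ tends to $0$, and the explicit estimates from those proofs turn this into operator-norm decay. For necessity I would test on the families from the boundedness proofs. In~(i) set $f_b(z)=\left((1-b^2)/(1-bz)^2\right)^{1/p}$ and $g_b(z)=\log\frac{2}{1-bz}$; then $\{f_b\}$ is bounded in $H^p$ and converges to $0$ uniformly on compacta, so compactness of $\mathcal H_\mu$ forces $\|\mathcal H_\mu(f_b)\|_{H^1}\to 0$ (any subsequential norm-limit vanishes pointwise by continuity of point evaluations, hence equals $0$). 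Pairing against $g_b\in VMOA$ and estimating from below as in Theorem~\ref{bound:p<1}(ii) yields $\lim_{b\to 1^-}\frac{\log\frac{2}{1-b^2}}{(1-b^2)^{1/p}}\mu([b,1))=0$. The test families $g_a(z)=((1-|a|^2)/(1-\bar a z)^2)^{1/q'}$ from Theorem~\ref{bound:p<1}(iii) handle (ii) and (iii) in the same way.

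Part~(iv) is the delicate one, since no vanishing hypothesis is assumed. The claim is that boundedness alone implies $\|\mathcal H_\mu-\mathcal H_{\mu_r}\|_{H^p\to H^q}\to 0$. For $1<q<p$, Proposition~\ref{acotacionhpmenorq} identifies this operator norm (up to multiplicative constants) with the $L^{(pq'/(p+q'))'}$-norm of $s\mapsto\int_0^{1-s}\chi_{(r,1)}(t)\,d\mu(t)/(1-t)$; by hypothesis the corresponding integrand for $r=0$ already lies in $L^{(pq'/(p+q'))'}([0,1))$, so dominated convergence gives the required decay. The case $q=1$ is parallel, using Theorem~\ref{bound:p>1}(iii) and the weight $\log(1/(1-t))$. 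The main obstacle here is precisely this \emph{quantitative} step: the arguments of Theorems~\ref{bound:p<1} and \ref{bound:p>1} must be inspected to confirm that the Carleson/logarithmic constants appearing in the upper bounds are controlled linearly by the $L^{(\cdot)'}$-norms of the relevant integrands, so that truncation followed by DCT yields operator-norm (and not merely strong) convergence.
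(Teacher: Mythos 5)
Your overall architecture (truncation $\mu_r=\mu|_{[0,r]}$, finite-rank approximation of $\mathcal H_{\mu_r}$ by Riemann sums, and operator-norm decay of the tail) is a legitimate alternative to the paper's argument, which instead works directly with the sequential compactness criterion and duality; your necessity tests are the same families $f_b$, $g_a$ the paper uses. However, there is a genuine gap in your necessity argument for parts (i) and (ii). You claim that since $f_b\to 0$ uniformly on compacta, any subsequential norm-limit of $\mathcal H_\mu(f_b)$ ``vanishes pointwise by continuity of point evaluations.'' Continuity of point evaluations only tells you that the norm-limit coincides with the pointwise limit of $\mathcal H_\mu(f_{b_n})(z)=\int_{[0,1)}f_{b_n}(t)(1-tz)^{-1}\,d\mu(t)$; it does not tell you that this pointwise limit is $0$, because the integral is carried by $t$ near $1$, exactly where $f_{b_n}$ does not tend to $0$ (for a measure that is $1/p$-Carleson but not better, $\int_{[0,1)}f_b\,d\mu$ stays bounded away from $0$). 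For $p>1$ one can rescue the claim via weak convergence and reflexivity, which covers part (iii), but for $p\le 1$ the space $H^p$ is not reflexive and no such shortcut is available. The paper fills this hole with a dedicated lemma (Lemma~\ref{lemafb}): since compactness implies boundedness, $\mu$ is already a $1$-logarithmic $1/p$-Carleson measure, and an integration by parts then yields $\int_{[0,1)}f_b\,d\mu\to 0$, which is what forces the subsequential limits to vanish. You need this (or an equivalent) step.

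The second issue is the quantitative one you yourself flag. Your sufficiency argument for (i)--(iii) and your entire part (iv) rest on the assertion that the operator norm of $\mathcal H_{\mu|_{(r,1)}}$ is controlled by the (logarithmic) Carleson constant, respectively by the $L^{(pq'/(p+q'))'}$-norm of the truncated balayage, of the tail measure. For (i)--(iii) this is checkable from the proofs of Theorems~\ref{bound:p<1} and~\ref{bound:p>1}: the upper bounds there are explicitly linear in $K_{1,\frac1p}(\mu)$ and $K_{\frac1p}(\mu)$, and the vanishing hypotheses do force these constants of the tail to decay, so that half is fine once written out. For part (iv), however, you need the quantitative form of Luecking's embedding theorem (embedding norm comparable to a power of the $L^{p/(p-q)}$-norm of the balayage), which is true but is not stated in the paper; Theorem~\ref{th-q-car-rad} as quoted is purely qualitative. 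The paper avoids this entirely by citing Blasco--Jarchow: for $q<p$ the embedding $H^{2pq'/(p+q')}\hookrightarrow L^2(d\mu)$ (respectively $H^p\hookrightarrow L^1(d\nu)$ when $q=1$) is automatically compact whenever it is bounded, after which the boundedness argument upgrades to compactness. Either route works, but as written your part (iv) is an announced plan rather than a proof.
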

\par\medskip The following lemma will be used in the proof of cases (i), (ii) and (iii).
\begin{lemma}\label{lemafb} Suppose that $0<p<\infty $ and let $\mu $ be a positive Borel
measure on $[0,1)$ which is an $1$-logarithmic $1/p$-Carleson
measure. Let $f_b$, $(0\le b<1)$, be defined as in
(\ref{eq:famfunctii}). Then
\begin{equation}\label{int-fb-zero}\lim_{b\to
1^-}\int_{[0,1)}\,f_b(t)\,d\mu (t)\,=\,0.\end{equation}
\end{lemma}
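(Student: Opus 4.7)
The plan is to recognize that this lemma is essentially a direct application of the characterization of $1$-logarithmic $1/p$-Carleson measures provided by Lemma A (Zhao's lemma). Specifically, taking $\alpha = 1$ and $s = 1/p$ in that lemma, the hypothesis that $\mu$ is an $1$-logarithmic $1/p$-Carleson measure on $[0,1)$ (viewed as a measure on $\D$) is equivalent to
$$K_{1,1/p}(\mu) = \sup_{a\in\D}\left(\log\frac{2}{1-|a|^2}\right)\int_{\D}\left(\frac{1-|a|^2}{|1-\bar{a}z|^2}\right)^{1/p}d\mu(z) < \infty.$$

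The key observation is that for the choice $a = b \in [0,1)$, the kernel inside the integral reduces exactly to the function $f_b(t)^p \cdot \text{(irrelevant)}$; more precisely, for $t\in[0,1)$,
$$\left(\frac{1-b^2}{|1-bt|^2}\right)^{1/p} = \left(\frac{1-b^2}{(1-bt)^2}\right)^{1/p} = f_b(t).$$
Thus specializing the supremum defining $K_{1,1/p}(\mu)$ to $a = b$ gives
$$\left(\log\frac{2}{1-b^2}\right)\int_{[0,1)} f_b(t)\,d\mu(t) \leq K_{1,1/p}(\mu) < \infty$$
for every $b\in[0,1)$.

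Rearranging, this yields
$$\int_{[0,1)} f_b(t)\,d\mu(t) \leq \frac{K_{1,1/p}(\mu)}{\log\frac{2}{1-b^2}},$$
and since $\log\frac{2}{1-b^2} \to \infty$ as $b\to 1^-$, the conclusion \eqref{int-fb-zero} follows immediately. There is no real obstacle here, as the entire argument is a direct specialization of the previously cited Lemma A; the only thing to verify is that the logarithmic factor in that lemma is precisely what guarantees the \emph{vanishing} (rather than merely the boundedness) of the integral of $f_b$ against $\mu$. In fact, the same computation shows that even under the weaker hypothesis that $\mu$ is a vanishing $1/p$-Carleson measure one could hope to obtain the same conclusion by a splitting argument, but the $1$-logarithmic hypothesis makes such a splitting unnecessary.
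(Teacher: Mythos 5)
Your proof is correct, and it takes a genuinely different (and shorter) route than the paper. You specialize the supremum in Zhao's characterization (Lemma~\ref{le:zh} with $\alpha=1$, $s=1/p$) to real points $a=b$, observe that for $t\in[0,1)$ the kernel $\bigl(\tfrac{1-b^2}{|1-bt|^2}\bigr)^{1/p}$ is exactly $f_b(t)$, and conclude
\[
\int_{[0,1)}f_b(t)\,d\mu(t)\,\le\,\frac{K_{1,1/p}(\mu)}{\log\frac{2}{1-b^2}}\,\to\,0,\quad b\to 1^-.
\]
The paper instead works directly from the box condition $\mu\left([t,1)\right)\lesssim (1-t)^{1/p}\big/\log\frac{e}{1-t}$: it integrates $\int f_b\,d\mu$ by parts and estimates $\int_0^1 f_b'(t)\,\mu\left([t,1)\right)dt$ by splitting the integral at $t=b$, which is a longer calculus exercise. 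Your route buys an explicit decay rate $O\bigl(1/\log\frac{1}{1-b}\bigr)$ and avoids that computation; its only cost is that it leans on the nontrivial equivalence of Lemma~\ref{le:zh} (which the paper has already quoted and uses elsewhere, so this is perfectly legitimate), whereas the paper's argument uses only the elementary definition of a $1$-logarithmic $1/p$-Carleson measure. One cosmetic slip: the phrase ``reduces exactly to the function $f_b(t)^p\cdot(\text{irrelevant})$'' is garbled --- the kernel equals $f_b(t)$ itself, as your subsequent displayed identity correctly states --- but this does not affect the argument.
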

\begin{pf} For $0\le t<1$, set $F(t)\,=\,\mu \left ([0,t)\right
)\,-\,\mu \left ([0,1)\right )\,=\,-\,\mu \left ([t,1)\right) $.
Integrating by parts and using the fact that $\mu $ is an
$1$-logarithmic $1/p$-Carleson measure, we obtain
\begin{equation}\label{parts-fb}\int_{[0,1)}\,f_b(t)\,d\mu (t)\,=\,
f_b(0)\,\mu \left ([0,1)\right )\,+\,\int_0^1\,f_b^\prime (t)\,\mu
\left ([t,1)\right )\,dt.\end{equation} Using that $\mu $ is an
$1$-logarithmic $1/p$-Carleson measure and the fact that $bt<b$ and
$bt<t$ ($0<\,b,t\,<1$), we deduce
\begin{equation*}\begin{split}\,&\int_0^1\,f_b^\prime (t)\,\mu
\left ([t,1)\right )\,dt\le\,
C\int_0^1\frac{(1-b)^{1/p}(1-t)^{1/p}}{(1-bt)^{\frac{2}{p}+1}\log\frac{e}{1-t}}\,dt\\
= &
C\int_0^b\frac{(1-b)^{1/p}(1-t)^{1/p}}{(1-bt)^{\frac{2}{p}+1}\log\frac{e}{1-t}}\,dt\,+\,
C\int_b^1\frac{(1-b)^{1/p}(1-t)^{1/p}}{(1-bt)^{\frac{2}{p}+1}\log\frac{e}{1-t}}\,dt
\\
\le &
C\,(1-b)^{1/p}\,\int_0^b\frac{dt}{(1-t)^{\frac{1}{p}+1}\log\frac{e}{1-t}}\,+
\,\frac{C}{(1-b)^{\frac{1}{p}+1}}\int_b^1\frac{(1-t)^{1/p}}{\log\frac{e}{1-t}}\,dt
\\
= & I(b)\,+\,II(b).
\end{split}\end{equation*}
Now, it is a simple calculus exercise to show that $I(b)$ and
$II(b)$ tend to $0$, as $b\to 1$. Using this, the fact that\,
$\lim_{b\to 1}f_b(0)\to 0$, and (\ref{parts-fb}), we deduce
(\ref{int-fb-zero}).
\end{pf}
\par\medskip
\begin{Pf}{\,\em{Theorem \ref{compactness}\,\@}}
(i).\, Suppose that $\mathcal H_\mu $ is a compact operator from
$H^p$ to $H^1$. Let $f_b$, $(0\le b<1)$, be defined as in
(\ref{eq:famfunctii}). Let $\{ b_n\} \subset (0,1)$ be any sequence
with $b_n\to 1$ and  such that the sequence $\{ \mathcal H_\mu
(f_{b_n})\} $ converges in $H^1$ (such a sequence exists because
$\sup _{0<b<1}\Vert f_b\Vert _{H^p}<\infty $ and $\mathcal H_\mu $
is compact) and let $g$ be the limit (in $H^1$) of $\{ \mathcal
H_\mu (f_{b_n})\} $. Then $\mathcal H_\mu (f_{b_n})\to g$, uniformly
on compact subsets of $\D $. Now, by Theorem \ref{Def:p<1}, we have
$$0\le H_\mu (f_{b_n})(r)=\int_{[0,1)}\frac{f_{b_n}(t)}{1-rt}\,d\mu
(t)\le \frac{1}{1-r}\int_{[0,1)}\,f_{b_n}(t)\,d\mu (t),\quad
0<r<1.$$ Since $\mathcal H_\mu $ is continuous from $H^p$ to $H^1$,
$\mu $ is an  $1$-logarithmic $1/p$-Carleson measure. Then, by
Lemma\,\@\ref{lemafb}, it follows that $g(r)=0$ for all $r\in
(0,1)$. Hence, $g\equiv 0$. In this way we have proved that
$$\mathcal H_\mu (f_b)\to 0, \quad\text{as $b\to 1$,\,\, in
$H^1$.}$$ Arguing as in proof of
 the boundedness
(Theorem\,\@\ref{bound:p<1}\,\@(ii)), this yields
$$\lim_{b\to 1^-}\frac{\mu \left ([b,1)\right
)\log\frac{e}{1-b}}{(1-b)^{1/p}}\,=\,0,$$ which is equivalent to
saying that $\mu $ is a vanishing $1$-logarithmic $1/p$-Carleson
measure.
\par\medskip
Suppose now that $\mu $ is a vanishing $1$-logarithmic
$1/p$-Carleson measure.  Let $\{ f_n\} _{n=1}^\infty $ be a sequence
of functions in $H^p$ with $\sup\Vert f_n\Vert_{H^p}<\infty $ and
such that $f_n\to 0$, uniformly on compact subsets of $\D $. For
$0<r<1$, let us write
$$d\mu_r(t)=\chi _{r<\vert z\vert <1}(t)\,d\mu (t).$$
Since $\mu $ is a vanishing $1$-logartihmic $1/p$-Carleson measure,
$\lim_{r\to 1}K_{1,\frac{1}{p}}(\mu _r)=0$. This together with the
fact that $f_n\to 0$, uniformly on compact subsets of $\D $
gives that
$$\lim_{n\to\infty }\int_0^1\,\vert f_n(t)\vert\,\vert
g(t)\vert\,d\mu (t)\,=\,0\quad \text{for all $g\in VMOA$.}$$ Using
the duality relation $(VMOA)^\star \cong H^1$ as in the proof of
Theorem\,\@\ref{bound:p<1}, this implies that $\mathcal H_\mu
(f_n)\to 0$, in $H^1$. So, $\h_\mu:\, H^p\to H^1$ is compact.

\par Parts (ii) and (iii)\, can be proved
similarly to  the preceding one. We shall omit the details. Let us
simply remark, for the necessity,
 that if $\mu $ is a vanishing $\frac{1}{p}+\frac{1}{q^\prime
}$-Carleson measure, then it is an $1$-logarithmic $1/p$-Carleson
measure and then we can use Lemma\,\@\ref{lemafb}.
\end{Pf}
\par\medskip
\begin{Pf}{\,\em{Theorem \ref{compactness}\,\@(iv).}}
Suppose that $1<p<\infty $ and $0<q<p$. Looking at the proof of
parts (ii) and (iii) of Theorem\,\@\ref{bound:p>1}, we see
$\mathcal H_\mu $ applies $H^p$ into $H^q$ if and only if:
\begin{itemize}\item $H^\frac{2pq^\prime}{p+q^\prime }$ is continuously
embedded in $L^2(d\mu )$, in the case $1<q<p$.
\item  $H^p$ is continuously
embedded in $L^1(d\nu )$, where $d\nu (t)=\log\frac{1}{1-t}\,d\mu
(t)$, in the case $q=1<p<\infty $.
\end{itemize}
Now, using the results in Section\,\@3 of \cite{Bl-Ja}, we see that
when any of these embeddings exists as a continuous operator, then
it is compact. Then arguments similar to those used in the
boundedness case can be used to yield the compactness. We omit the
details.
\end{Pf}
\section{Schatten classes.}\label{Schatten}

\medskip\par Before presenting the proof of Theorem \ref{th:Schatten} let us recall some definitions
which connect the operator $\h_\mu$ with the classical theory of
Hankel operators.  \par Given $\varphi
(\xi)\sim\sum_{n=-\infty}^{+\infty}\widehat{\varphi}(n)\xi^n\in
L^2(\T)$, the associated (big) Hankel operator
\par\noindent $H_\varphi: H^2\to H^2_{-}$
 (see \cite[p. $6$]{Pell}) is formally defined
as
$$H_\varphi(f)=I-P(\varphi f)$$
where $P$ is the Riesz projection.

\par Moreover, if $\mu$ is a classical Carleson measure, Nehari's Theorem implies that
(see  \cite[p.\,\@3 and p.\,\@42]{Pell}) there exists
$\varphi_\mu\in L^\infty (\T)$ with
$\mu_{n+1}=\widehat{\varphi_\mu}(-n)$, so
$$\h_\mu(f)(z)-\h_\mu(f)(0)=H_{\varphi_\mu}(f)(\bar{z}).$$
In particular, $\h_\mu$ is bounded on $H^2$ if and only if
$H_{\varphi_\mu}: H^2\to H^2_{-}$ is bounded, that is, if  and only
if \,$\mu$ is a Carleson measure. Finally let us observe that,
$$(I-P)(\varphi_\mu)(\bar{z})=\sum_{n=1}^\infty \mu_{n+1}z^n$$

\begin{Pf}{\em{Theorem \ref{th:Schatten}.}}
It follows from the above observation, \cite[p. $240$,
Corollary\,\@2.\,\@2]{Pell} and \cite[Appendix $2.6$]{Pell} that
$\mathcal H_\mu \in \mathcal S_p(H^2)$ if and only if
$h_\mu(z)=\sum_{n=1}^\infty \mu_{n+1}z^{n}\in B^p$.  Bearing in mind
\cite[Theorem $2.1$]{MP} (see also \cite[p. $120$,
$7.5.8$]{Pabook}),
 \cite[p. $120$, $7.3.5$]{Pabook}, the fact that $\{\mu_n\}$ decreases to zero  and \cite[Lemma $3.4$]{LNP}, we deduce that
\begin{equation*}\begin{split}
||h_\mu||^p_{B^p} & \asymp \sum_{n=0}^\infty
2^{-n(p-1)}\left\|\sum_{k=2^n}^{2^{n+1}-1}k\mu_{k+1} z^{k-1}
\right\|^p_{H^p}
\\ &  \asymp\sum_{n=0}^\infty 2^{n}\left\|\sum_{k=2^n}^{2^{n+1}-1}\mu_{k+1} z^{k-1} \right\|^p_{H^p}
\\ & \lesssim \sum_{n=0}^\infty 2^{n} \mu^p_{2^n}
\left\|\sum_{k=2^n}^{2^{n+1}-1}z^{k-1} \right\|^p_{H^p}.
\end{split}\end{equation*}
We claim that
\begin{equation}\label{bloque}
\left\|\sum_{k=2^n}^{2^{n+1}-1}z^{k-1} \right\|^p_{H^p}\asymp
2^{n(p-1)}.
\end{equation}
Then, using again that $\{\mu_n\}$ is decreasing
\begin{equation*}\begin{split}
||h_\mu||^p_{B^p} &\lesssim \sum_{n=0}^\infty 2^{np} \mu^p_{2^n}
 \lesssim \sum_{n=0}^\infty 2^{n(p-1)} \sum_{k=2^n-1}^{2^n} \mu^p_{k}
 \asymp \sum_{k=0}^\infty (k+1)^{p-1}\mu_k^p.
\end{split}\end{equation*}
\par An analogous reasoning using the left hand inequality in \cite[Lemma $3.4$]{LNP} proves that
\begin{equation*}\begin{split}
||h_\mu||^p_{B^p} \gtrsim \sum_{n=0}^\infty (k+1)^{p-1}\mu_k^p.
\end{split}\end{equation*}

Finally, we shall prove \eqref{bloque}. By  \cite[Lemma $3.1$]{MP}
and the M. Riesz projection theorem, it follows that
    \begin{equation}\label{eq:j11}
    \begin{split}
    \left\|\sum_{k=2^n}^{2^{n+1}-1}z^{k-1} \right\|^p_{H^p}
    \lesssim M^p_p\left(1-\frac{1}{2^n},\sum_{k=2^n}^{2^{n+1}-1}z^{k-1}\right)
    \lesssim  M^p_p\left(1-\frac{1}{2^n},\frac{1}{1-z} \right) \asymp 2^{n(p-1)}.
    \end{split}
    \end{equation}
On the other hand, using \cite[Lemma~3.1]{MP}, we obtain
    \begin{equation*}
    \begin{split}
  & M_\infty\left(1-\frac{1}{2^n}, \sum_{k=2^n}^{2^{n+1}-1}z^{k-1} \right) \asymp \left\|\sum_{k=2^n}^{2^{n+1}-1}z^{k-1} \right\|_{H^\infty}=2^n.
    \end{split}
    \end{equation*}
Furthermore, using a well-know inequality, we deduce that
    \begin{equation*}
    \begin{split}
    M_\infty\left(1-\frac{1}{2^n}, \sum_{k=2^n}^{2^{n+1}-1}z^{k-1} \right)
    &\lesssim \left(\frac{1}{2^n}-\frac{1}{2^{n+1}}\right)^{-1/p}
    M_p\left(1-\frac{1}{2^{n+1}}, \sum_{k=2^n}^{2^{n+1}-1}z^{k-1} \right)
    \\ & \lesssim \left(\frac{1}{2^n}\right)^{-1/p}
    \left\|\sum_{k=2^n}^{2^{n+1}-1}z^{k-1} \right\|_{H^p},
    \end{split}
    \end{equation*}
that is, $ \left\|\sum_{k=2^n}^{2^{n+1}-1}z^{k-1}
\right\|^p_{H^p}\gtrsim 2^{n(p-1)}$, which together with
\eqref{eq:j11} implies \eqref{bloque}. This finishes the proof.
\end{Pf}

\end{document}